\newcommand{\Hull}{\operatorname{Hull}}
\newcommand{\abs}[1]{\lvert#1\rvert}
\newcommand{\innprd}[2]{\left( #1 , #2 \right)}
\newcommand{\EE}{\mathbb E}
\def\norm#1{\left|\!\left| #1 \right|\!\right|}
\def\nnorm#1{|\!| #1 |\!|}
\def\op#1{{\mathcal #1}}
\def\Re{I\!\!R}
\def\be{\begin{eqnarray}}
\def\ee{\end{eqnarray}}
\def\bes{\begin{eqnarray*}}
\def\ees{\end{eqnarray*}}
\def\R{\mathbb{R}}
\def\C{\mathbb{C}}
\def\N{\mathbb{N}}
\def\abs#1{\lvert#1\rvert}
\newtheorem{theorem}{Theorem} 
\newtheorem{lemma}[theorem]{Lemma} 
\newtheorem{proposition}[theorem]{Proposition} 
\newtheorem{corollary}[theorem]{Corollary} 
\newdefinition{remark}{Remark} 
\newproof{proof}{Proof}
\newproof{pot}{Proof of Theorem \ref{thm2}}
\newtheorem{conjecture}[theorem]{Conjecture}
\journal{J. Comput. Appl. Math.}
\begin{document}

\begin{frontmatter}

\title{Sharp estimates for the convergence rate of Orthomin(k) for a  class of linear systems\tnoteref{t1}} 
\tnotetext[t1]{This work was supported the National Science Foundation under award number 2409951.} 

% The thanks line in the title should be filled in if there is
% any support acknowledgement for the overall work to be included
% This \thanks is also used for the received by date info, but
% authors are not expected to provide this.

\author[1]{Andrei Dr{\u{a}}g{\u{a}}nescu\corref{cor1}} % \fnref{fn1}}
\ead{draga@umbc.edu}
\affiliation[1]{organization={Department of Mathematics and Statistics, University of Maryland, Baltimore
     County}, 
   addressline={1000~Hilltop Circle},
   postcode={MD, 21250}, city={Baltimore}, country={USA}}

\author[2]{Florin Spinu}
\ead{fspinu@gmail.com}
\affiliation[2]{%organization={none}, addressline={none},
          city={New York},
          %postcode={00000},
          country={USA}}

\cortext[cor1]{Corresponding author}

\begin{abstract}
In this work we show that the convergence rate of Orthomin($k$)
applied to systems of the form $(I+\rho U) x = b$, where $U$ is a
unitary operator and $0<\rho<1$, is  less than or equal to
$\rho$. Moreover, we give examples of operators $U$ and $\rho>0$ for
which the asymptotic convergence rate  of Orthomin($k$) is exactly $\rho$, thus
showing that the estimate is sharp.   While the systems under scrutiny
may not be of great interest in themselves, their existence shows that, in general,
Orthomin($k$) does not converge faster than Orthomin(1). Furthermore, we give
examples of systems for which Orthomin($k$) has the same asymptotic convergence rate 
as Orthomin($2$) for $k\ge 2$, but smaller than that of Orthomin($1$). The latter systems
are related to the numerical solution of certain partial differential equations.
\end{abstract}

\begin{keyword} 
Orthomin($k$) \sep iterative methods
\MSC[2020] 65F10
\end{keyword}

\end{frontmatter}

%\pagestyle{myheadings}
%\thispagestyle{plain}
%\markboth{A.~DR{\u A}G{\u A}NESCU AND F.~SPINU}{CONVERGENCE RATE ESTIMATES FOR ORTHOMIN(K)}

%-----------SECTION-----------%
\section{Introduction} 
\label{omin:sec:intro}
%--------------------------%
%Plan
%\begin{itemize}
%\item Origin, brief description
%\item Interesting facts: converges if field of values; O(1) like steepest descent; O(2) like CG
%\item Expected: O(2) converges faster than O(1) in general; only general result gives same estimate for all
%\item Our examples: indeed for pathological cases all O(k) converge at the same rate
%\item Some of these examples can show up in practice
%\item Analysis: difficult, we can only show O(1), othewise numerical evidence (Section 3)
%\item We have further examples that extend the existing ones: k-ellipses.
%\item Conjecture: to prove improved estimates we need additional hyp:
%\end{itemize}

Originating with the work of Vinsome~\cite{Vinsome76}, Orthomin($k$)  ($k=1, 2, 3, \dots$) is  a family 
of iterative methods for solving linear systems of the form 
\begin{equation}
\label{omin:eq:gensys}
A x = b,
\end{equation}
where {\color{black}$A\in \C^{d\times d}$} is a nonsingular, possibly non-symmetric matrix, and $b\in\mathbb{C}^d$.
{\color{black}While Orthomin has received less attention compared to other iterative methods for non-symmetric systems,
it has also known some developments and extensions over time, e.g., to nonlinear systems~\cite{chen2001nonlinear}, and to
singular and inconsistent systems~\cite{MR1804663, abe2004variant, abe2008variant}.
It has also been applied to several problems involving fluid flows~\cite{li2005comparison, modak2006new, houzeaux2011extension}.}

Following~\cite{MR1990645}, Orthomin($k$) can be regarded as an incompletely orthogonalized, or truncated,
version of the \emph{Generalized Conjugate Residual} (GCR) method, similarly to the way the
\emph{Incomplete Orthogonalization Method} (IOM) is related to the 
\emph{Full Orthogonalization Method} (FOM), or quasi-GMRES to GMRES.
The main attraction of truncated Krylov-space methods lies in the fact that they lead to a fixed-term recurrence, thus 
the cost per iteration is fixed (or bounded). The downside is that the convergence rate is not guaranteed to improve by increasing 
the number of terms in the recurrence; to the best of our knowledge, this fact has  not been rigorously justified. 
We quote from~\cite{MR1474725} (p.34): ``Unfortunately, no stronger a priori bounds on the residual norm are known for 
Orthomin(2) applied to a general matrix whose field of values does not contain the origin although, in practice, 
it may perform significantly better than Orthomin(1).'' This is the main reason for which, in practice, GMRES is restarted after a
number of steps rather than the orthogonalization process being truncated, i.e., using quasi-GMRES.

The main goal of this article is to examine the behavior of Orthomin($k$) on a family of examples, and to show
essentially that for any $k>1$ there are examples of systems where Orthomin($k$) has the same convergence rate
as Orthomin($1$). This is consistent with the quote above; namely, it shows that, in absence of additional
assumptions on the matrix $A$, no a priori bounds can be found to show that Orthomin($k$) converges faster than
Orthomin($1$). 

%-----------SECTION-----------%
\section{Brief background} 
\label{omin:sec:background}
In this section we describe Orthomin($k$) and give a brief background on the main known convergence result.
If $x_n$ is the $n^{\mathrm{th}}$ iteration  and $r_n=b-A x_n$ is the $n^{\mathrm{th}}$ residual,
the main idea is to find as $x_{n+1} = x_n+x$, with  the correction $x$ lying in a $k$-dimensional subspace~$V_k^{(n)}$
(or $(n+1)$-dimensional for $(n+1) < k$), so that the Euclidean norm of the next residual~$r_{n+1}=r_n-A x$ is minimized:
\be
\label{eq:rnortho}
\nnorm{r_{n+1}}^2 = \min_{x\in V_k^{(n)}}\nnorm{r_n-A x}^2\ .
\ee
The definition~\eqref{eq:rnortho} is equivalent to
\be
\label{eq:rdefproj}
r_{n+1} = r_n-\Pi_{A V_k^{(n)}} r_n\ ,
\ee
where $\Pi_V$ is the orthogonal projection on a subspace $V$. The algorithm generates a sequence of vectors
$p_0, p_1, p_2, \dots $, called the search directions, and for $n\ge k-1$ 
the space $V_k^{(n)}$ is generated by the last $k$ search directions $p_n, p_{n-1}, \dots, p_{n-k+1}$; 
for~\mbox{$n < k-1$} the space $V_k^{(n)}$ is simply $\mathrm{span}\{p_n, p_{n-1},\dots,p_0\}$. 
To give a precise formulation, for an initial guess  $x_0$ we initialize the residual and the
initial search direction by $p_0=r_0=b-A x_0$, and  the Orthomin($k$) iteration reads
\be
\label{eq:orthomincoeffa}
\lambda_n&=&\frac{\innprd{r_n}{A p_n}}{\innprd{A p_n}{A p_n}},\ \  \ 
\nu_n^{(j)}=\frac{\innprd{A r_{n+1}}{A p_{n-j+1}}}{\innprd{A p_{n-j+1}}{A p_{n-j+1}}},\\
\label{eq:orthominxr}
x_{n+1}&=&x_n+\lambda_n p_n\ , \ \ \ r_{n+1}=r_n-\lambda_n A p_n,\\
\label{eq:orthominp}
p_{n+1}&=&r_{n+1}-\sum_{j=1}^{\min(k-1,n+1)} \nu_n^{(j)} p_{n-j+1},
\ee 
%with 
%\be
%\label{eq:orthomincoeffa}
%\hspace{-25pt}\lambda_n=\frac{\innprd{r_n}{A p_n}}{\innprd{A p_n}{A
%p_n}},\ \  \nu_n^{(j)}=\frac{\innprd{A r_{n+1}}{A
%p_{n-j+1}}}{\innprd{A p_{n-j+1}}{A p_{n-j+1}}}, 
%\ee 
for $j=1,\dots,\min(k-1,n+1)$. Here  $\innprd{u}{v}=\sum_{j=1}^d u_j
\overline{v_j}$ denotes  the inner product in $\mathbb{C}^d$, and
$\nnorm{u}\stackrel{\mathrm{def}}{=} \sqrt{\innprd{u}{u}}$.  The
coefficients $\lambda_n$ and $\nu_n^{(j)}$
in~\eqref{eq:orthomincoeffa} are defined so that  \be
\label{eq:aperp}
r_{n+1}\perp A p_n\ \   \mathrm{and}\ \ A p_{n+1}\perp A p_{n-j+1}\ ,\ \ \mathrm{for}\ \ 
j=1,\dots,\min(k-1,n+1)\ .  \ee 
An inductive argument shows that
$r_{n+1}\perp A p_{n-j+1}$ for $j=1,\dots,\min(k,n+1)$, and
hence~\eqref{eq:rnortho} holds.

Although {\color{black} of limited use} in practice, Orthomin($k$) could be thought of as attractive mainly for two  reasons. 
First, as with other truncated Krylov-space methods, Orthomin($k$) requires only one matrix-vector per iteration,
and the additional cost (per iteration) is~$O(k d)$~Flops; a maximum number of $k$ vectors need to be stored. 
Second, when  symmetric positive preconditioners are used
to produce a split preconditioning of Orthomin($k$), the preconditioned
iteration can be implemented without reference to the factors of the 
preconditioners. This is  a feature shared with preconditioned 
\emph{conjugate gradient} (CG), as shown by Elman~\cite{elmanthesis}, and it allows for matrix-free preconditioning.

In terms of convergence properties, Orthomin($k$) is guaranteed to converge, {\color{black}for any initial guess,} if 
the field of values\footnote{The \emph{field of values} or \emph{numerical range} 
of a complex matrix $A$ is defined as the set of complex numbers
$\op{F}(A) = \left\{ \innprd{A u}{u}\ :\ u\in \mathbb{C}^d,\ \norm{u}=1\right\}\ .$} 
of the matrix~$A$ does not contain~0. 
%While the condition $0\notin \op{F}(A)$ prevents the use of Orthomin($k$) for solving
%indefinite systems  such as the Helmholtz or Stokes equations, its attractiveness stems from the fact that the
%algorithms can still be quite effective for many problems of interest. Unlike the
%much used GMRES algorithm, the work per iteration for Orthomin($k$) is constant.
%(also mention matrix-free preconditioning). 
The precise convergence result and estimate shown below appears in~\cite{MR1474725} as Theorem~2.2.2, and was proved first 
in~\cite{MR694523} (see elso Elman~\cite{elmanthesis}) for matrices with positive
definite symmetric part. We recall the following result from~\cite{MR1474725}:
\begin{theorem}
\label{omin:th:om1descent}
Assume that $0\notin \op{F}(A)$ and $\delta=\mathrm{dist}(0, \op{F}(A))$. If $r_n$
is the~$n^{\mathrm{th}}$ residual in the Orthomin\textnormal{(}$k$\textnormal{)} iteration, then
\begin{equation}
  \label{omin:eq:om1descent}
  \nnorm{r_{n+1}}\le \nnorm{r_n} \sqrt{1-\frac{\delta^2}{\nnorm{A}^2}}\ ,
\end{equation}
where $\nnorm{A}$ is the $2$-norm of the matrix $A$.
\end{theorem}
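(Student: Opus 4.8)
The plan is to reduce the bound to a single-step minimization statement and then estimate that minimization using one well-chosen vector. By construction \eqref{eq:rnortho}, the residual $r_{n+1}$ is the residual of smallest norm among all corrections drawn from $V_k^{(n)}$, and since $p_n\in V_k^{(n)}$ we certainly have
\begin{equation*}
\nnorm{r_{n+1}}^2 \;=\; \min_{x\in V_k^{(n)}}\nnorm{r_n-Ax}^2 \;\le\; \min_{\lambda\in\C}\nnorm{r_n-\lambda A p_n}^2\ .
\end{equation*}
Moreover, because $p_0=r_0$ and, by \eqref{eq:orthominp}, each $p_n$ differs from $r_n$ by a combination of earlier search directions $p_{n-j+1}$, an easy induction shows $r_n\in\mathrm{span}\{p_n,p_{n-1},\dots\}$, hence $\lambda=1$ is an admissible choice in the inner minimization over corrections proportional to $r_n$; but more directly I will keep the correction along $p_n$ and use the explicit optimal $\lambda_n$ from \eqref{eq:orthomincoeffa}. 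The point is that $r_n$ itself lies in $V_k^{(n)}$ (it does, since $r_n = p_n + \sum \nu_{n-1}^{(j)}p_{n-j}$ and all those $p$'s are among the generators of $V_k^{(n)}$ for $k\ge 1$ — for $k=1$ one has $p_n=r_n$ outright), so in fact
\begin{equation*}
\nnorm{r_{n+1}}^2 \;\le\; \min_{t\in\C}\nnorm{r_n - t\,A r_n}^2\ .
\end{equation*}

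Next I would compute the right-hand side. Expanding,
\begin{equation*}
\nnorm{r_n - tAr_n}^2 \;=\; \nnorm{r_n}^2 - 2\,\mathrm{Re}\!\left(\bar t\,\innprd{r_n}{Ar_n}\right) + |t|^2\nnorm{Ar_n}^2\ ,
\end{equation*}
and minimizing over $t\in\C$ gives the minimum value
\begin{equation*}
\nnorm{r_n}^2 - \frac{|\innprd{Ar_n}{r_n}|^2}{\nnorm{Ar_n}^2}\ .
\end{equation*}
Now I invoke the hypothesis $0\notin\op{F}(A)$: writing $r_n = \nnorm{r_n}\,u$ with a unit vector $u$, we have $\innprd{Au}{u}\in\op{F}(A)$, so $|\innprd{Ar_n}{r_n}| = \nnorm{r_n}^2\,|\innprd{Au}{u}| \ge \nnorm{r_n}^2\,\delta$, using $\delta=\mathrm{dist}(0,\op{F}(A))\le|\innprd{Au}{u}|$. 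Also $\nnorm{Ar_n}\le\nnorm{A}\,\nnorm{r_n}$. Substituting these two inequalities,
\begin{equation*}
\nnorm{r_{n+1}}^2 \;\le\; \nnorm{r_n}^2 - \frac{\delta^2\nnorm{r_n}^4}{\nnorm{A}^2\nnorm{r_n}^2} \;=\; \nnorm{r_n}^2\left(1-\frac{\delta^2}{\nnorm{A}^2}\right),
\end{equation*}
and taking square roots yields \eqref{omin:eq:om1descent}.

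The only genuinely delicate point is the first reduction: one must be sure that $r_n\in V_k^{(n)}$ (equivalently that the correction $-\,t\,Ar_n$ realizing the one-step bound is attainable within the admissible search space), and that the Orthomin iteration is well defined, i.e. that $\nnorm{Ap_n}\neq 0$ so $\lambda_n$ makes sense — both follow from $0\notin\op{F}(A)$, which forces $A$ to be nonsingular and in fact forces $\innprd{Ap_n}{p_n}\neq0$ whenever $p_n\neq0$, hence $Ap_n\neq0$. A secondary check is the edge case $n<k-1$, where $V_k^{(n)}=\mathrm{span}\{p_0,\dots,p_n\}$; but $r_n$ still lies in this span by the same induction, so nothing changes. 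Everything else — the quadratic minimization, the field-of-values lower bound, the operator-norm upper bound — is routine. If one prefers to avoid arguing that $r_n\in V_k^{(n)}$, an equally clean alternative is to bound $\nnorm{r_{n+1}}\le\min_{\lambda}\nnorm{r_n-\lambda Ap_n}$ and then separately relate $\innprd{Ap_n}{r_n}$ and $\nnorm{Ap_n}$ to the field of values using the relation $p_n = r_n - (\text{combination of earlier }p\text{'s})$ together with the orthogonality relations \eqref{eq:aperp}; this is the route taken in the classical references, and I would fall back on it only if the direct argument runs into trouble with the coefficient bookkeeping.
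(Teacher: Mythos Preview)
The paper does not actually prove Theorem~\ref{omin:th:om1descent}; it merely recalls the statement from Greenbaum's text~\cite{MR1474725} and the original source~\cite{MR694523}. So there is no in-paper proof to compare against.

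Your argument is correct and is essentially the classical one. The crucial step---observing that $r_n\in V_k^{(n)}$ (from \eqref{eq:orthominp} one has $r_n=p_n+\sum_{j=1}^{\min(k-1,n)}\nu_{n-1}^{(j)}p_{n-j}$, and all terms lie in $V_k^{(n)}$ in both the cases $n\ge k-1$ and $n<k-1$), so that $tAr_n\in AV_k^{(n)}$ and hence $\nnorm{r_{n+1}}\le\nnorm{r_n-tAr_n}$ for every $t\in\C$---is precisely the mechanism the paper itself deploys in the proof of Theorem~\ref{th:convnormal}, there with the specific choice $t=z_0^{-1}$. Your version simply optimizes over $t$ and then feeds in the field-of-values lower bound $|\innprd{Ar_n}{r_n}|\ge\delta\nnorm{r_n}^2$ and the norm bound $\nnorm{Ar_n}\le\nnorm{A}\,\nnorm{r_n}$. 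Nothing is missing.
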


We also recall from~\cite{MR1474725} the parallelism between Orthomin(1) and \emph{Steepest Descent} (SD) {\color{black}on} one hand, 
and between Orthomin(2) and CG, on the other. 
SD can only be used in connection to symmetric positive definite~(SPD) systems and 
has an iteration of the form~\eqref{eq:orthominxr} with the search direction given by  $p_n=r_n$, just like Orthomin(1). 
However, for SD the coefficient $\lambda_n$ is chosen so that 
$$e_{n+1}= e_n-\Pi_{\mathrm{span}\{r_n\}}^A e_n\ ,$$
where $\Pi_V^A$ is the projection on the subspace $V$ with respect to the $A$-inner product
$\innprd{u}{v}_A = \innprd{A u}{v}$. Consequently, the error estimates for steepest descent are
similar to the ones for Orthomin(1), and in practice the two methods converge comparably fast for SPD systems. Analogously, 
the sequence of search directions $p_0, p_1, \dots$ for CG follows a recursion that is similar
to Orthomin(2), except for in CG we have
$$e_{n+1}= e_n-\Pi_{\mathrm{span}\{p_n,p_{n+1}\}}^A e_n\ .$$
In addition, in the case of CG, the second set of orthogonality relations in~\eqref{eq:aperp}
is replaced by the $A$-orthogonality relation \mbox{$p_{n+1}\perp_A p_n$} (conjugate), whereas for Orthomin(2)
they read $A p_{n+1}\perp A p_n$.
Even though the superiority of CG over SD is well established and understood~\cite{MR1990645}, not the same can be said about
the relation of Orthomin(2) with Orthomin(1) for non-symmetric systems. 

The main contribution of this article is to show that  Orthomin($k$) {\bf does not perform better in general} (that is, for matrices $A$
that satisfy $0\notin \op{F}(A)$) than Orthomin($1$). In Section~\ref{sec:mainexample} we  consider matrices of the form
$A=I+\rho U$ with \mbox{$0<\rho<1$} and $U$ unitary. {\color{black} First we show that the convergence rate of  Orthomin($k$)
for such systems is less than or equal to $\rho$. Next we conjecture that, for certain examples -- all involving 
diagonal unitary matrices~$U$ -- the asymptotic convergence rate of  Orthomin($k$) is precisely $\rho$;} 
we support our conjecture with numerical evidence for $k\ge 2$ and we provide analytical arguments
for~\mbox{$k=1$} in Section~\ref{sec:o1convergence}, which forms the core of this article.
Prior to the analysis of the convergence rate of Orthomin(1), in Section~\ref{sec:ellipses} we give
examples of systems for which  Orthomin($j$), $j=2,\dots, k$ all achieve the same asymptotic convergence rate, but converge
faster than Orthomin(1).

\section{The main examples}
\label{sec:mainexample}
{\color{black}Throughout this article we denote by 
%$\C^{d\times d}$ the space of $d\times d$ complex matrices, and by
$\sigma(A)$ the spectrum of a matrix $A$. Furthermore, for $z\in \C$ and $\rho>0$ let 
$$\overline{\op{B}_{\rho}(z)} = \{w\in \C\: :\: |w-z|\le \rho\}.$$}
Consider the linear system
\be
\label{eq:sysmainex}
(I+\rho U)x = b\ ,
\ee
where $0<\rho<1$, $U\in {\color{black}\C^{d\times d}}$ is a unitary matrix, and $b\in\C^d$. 
%The right-hand side vector $b$ was chosen this way only to make the example as specific as possible.
Our goal is to assess the behavior/convergence of the ratios
\begin{equation}
\label{eq:qndef}
q_n=\frac{\nnorm{r^{(k)}_{n+1}}}{\nnorm{r^{(k)}_n}}\ ,
\end{equation}
where $r^{(k)}_n$ is the $n^{\mathrm{th}}$ residual in the Orthomin($k$) iteration. 

%%%%%%%%%%%%
% subsection
%%%%%%%%%%%%
\subsection{An upper bound}
\label{ssec:upperbound}
The fact that
$q_n$ is bounded above by $\rho$ is a consequence of the following result.
\begin{theorem}
\label{th:convnormal}
Let $A\in {\color{black}\C^{d\times d}}$ be a normal matrix so that 
\be
\label{eq:speccond}
\sigma(A)\subseteq \overline{\op{B}_{\rho}(z_0)}
\ee
with \mbox{$0<\rho<\abs{z_0}$}.
The residuals $r_n^{(k)}$ obtained by applying the Orthomin$(k)$ iteration to 
the system~\eqref{omin:eq:gensys} satisfy
\be
\label{eq:convnormal}
\nnorm{r_{n+1}^{(k)}}\le \frac{\rho}{\abs{z_0}}\nnorm{r_n^{(k)}}\ .
\ee
\end{theorem}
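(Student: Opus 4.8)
The plan is to reduce the statement to a one-step estimate and then to a purely spectral inequality about normal matrices. By the minimisation property~\eqref{eq:rnortho} the residual $r_{n+1}$ has smallest norm among all $r_n - A x$ with $x\in V_k^{(n)}$, and $V_k^{(n)}$ contains the current search direction $p_n$; hence $\nnorm{r_{n+1}}\le\nnorm{r_n-\lambda A p_n}$ for every $\lambda\in\C$, and minimising the right-hand side over $\lambda$ gives
\[
\nnorm{r_{n+1}}^2 \ \le\ \nnorm{r_n}^2 - \frac{\abs{\innprd{r_n}{A p_n}}^2}{\nnorm{A p_n}^2}\ .
\]
It therefore suffices to bound $\innprd{r_n}{A p_n}$ below and $\nnorm{A p_n}$ above.

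Both bounds follow from the recursion~\eqref{eq:orthominp} together with the orthogonality relations~\eqref{eq:aperp}. Writing $p_n = r_n - \sum_j \nu_{n-1}^{(j)} p_{n-j}$ and using that $r_n \perp A p_{n-j}$ for every index $j$ appearing in the sum, all the correction terms drop out, so $\innprd{r_n}{A p_n} = \innprd{r_n}{A r_n}$. For the norm, the coefficients $\nu_{n-1}^{(j)}$ are exactly those making $A p_n$ the component of $A r_n$ orthogonal to $\mathrm{span}\{A p_{n-1},\dots,A p_{n-k+1}\}$, whence $\nnorm{A p_n}\le\nnorm{A r_n}$. Substituting these two facts, the theorem reduces to the spectral estimate
\[
\frac{\abs{\innprd{A v}{v}}^2}{\nnorm{A v}^2\,\nnorm{v}^2}\ \ge\ 1-\frac{\rho^2}{\abs{z_0}^2}\qquad\text{for every }v\ne 0 .
\]

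The last step uses normality in an essential way. Diagonalising $A$ in an orthonormal eigenbasis and writing $v=\sum_i c_i e_i$ with $A e_i = \lambda_i e_i$, the numbers $\mu_i=\abs{c_i}^2/\nnorm{v}^2$ form a probability distribution supported on $\sigma(A)\subseteq\overline{\op{B}_\rho(z_0)}$, and the estimate becomes $\abs{\E[\lambda]}^2\ge (1-\rho^2/\abs{z_0}^2)\,\E[\abs{\lambda}^2]$. Substituting $\lambda = z_0+\rho\zeta$ with $\abs{\zeta}\le 1$ and setting $m=\E[\zeta]$, one has $\E[\abs{\lambda}^2]-\abs{\E[\lambda]}^2 = \rho^2\var(\zeta)$ with $\var(\zeta)\le 1-\abs{m}^2$; after clearing denominators the inequality reduces to $\abs{z_0+\rho m}^2\ge(\abs{z_0}^2-\rho^2)(1-\abs{m}^2)$, and using $\abs{z_0+\rho m}\ge\abs{z_0}-\rho\abs{m}$ this is just $(\abs{z_0}\abs{m}-\rho)^2\ge 0$.

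I expect the spectral estimate to be the only real obstacle, and specifically getting the sharp constant: the naive bounds $\abs{\E[\lambda]}\ge\abs{z_0}-\rho$ and $\var(\zeta)\le\rho^2$ only yield the factor $\rho^2/(\abs{z_0}-\rho)^2$, which is too large. One must keep the correlation — when the mean is pushed toward the boundary of the disk the variance is forced to be small — which is exactly what $\var(\zeta)\le 1-\abs{m}^2$ records and what normality makes available. For a general $A$ with merely $\op{F}(A)\subseteq\overline{\op{B}_\rho(z_0)}$ this is unavailable, and Theorem~\ref{omin:th:om1descent} gives in the worst case only the rate $2\sqrt{\abs{z_0}\rho}\,/(\abs{z_0}+\rho)$, which is strictly larger than $\rho/\abs{z_0}$.
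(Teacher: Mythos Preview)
Your proof is correct, but the paper's argument is considerably shorter and takes a somewhat different route. Both reduce to the one-step bound $\nnorm{r_{n+1}}\le\min_{\lambda\in\C}\nnorm{r_n-\lambda A r_n}$, but they reach it and exploit it differently. You go through the single direction $p_n$, using the orthogonality relations to show $\innprd{r_n}{A p_n}=\innprd{r_n}{A r_n}$ and $\nnorm{A p_n}\le\nnorm{A r_n}$ (this is the classical Orthomin convergence machinery behind Theorem~\ref{omin:th:om1descent}); the paper instead observes directly from~\eqref{eq:orthominp} that $r_n\in V_k^{(n)}$, hence $A r_n\in A V_k^{(n)}$, so the minimisation~\eqref{eq:rnortho} may be tested against the specific vector $v=z_0^{-1}A r_n$. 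The real economy is in the spectral step: you compute the minimum explicitly and then prove the resulting moment inequality $\abs{\E[\lambda]}^2\ge(1-\rho^2/\abs{z_0}^2)\,\E[\abs{\lambda}^2]$ via a variance argument, whereas the paper simply evaluates $\nnorm{r_n-z_0^{-1}A r_n}=(\rho/\abs{z_0})\nnorm{U r_n}$ with $U=\rho^{-1}(A-z_0 I)$ normal and $\sigma(U)\subseteq\overline{\op{B}_1(0)}$, so that $\nnorm{U}\le 1$ and the bound follows term by term. In effect the paper picks the suboptimal but sufficient $\lambda=z_0^{-1}$ and sidesteps your inequality entirely. Your route has the merit of making the equality case visible (it forces $\abs{\zeta_i}=1$ on the support of $r_n$), and your closing paragraph correctly identifies why normality is essential; but for the theorem itself the paper's three-line argument is the cleaner one.
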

%
%\indent{\em Proof}.
\begin{proof}
Let $U=\rho^{-1}(A-z_0 I)$. Since $\sigma(A)\subseteq \overline{\op{B}_{\rho}(z_0)}$
we have $\sigma(U)\subseteq \overline{\op{B}_{1}(0)}$. Because $A$ is normal it follows that $U$ is also normal, hence
$\nnorm{U}_2\le 1$. % (see~\cite{MR98b:47008}). 
If $p_0,p_1,\dots $ are the search directions of Orthomin($k$) we have
\bes
{r}_{n+1}^{(k)} = {r}_n^{(k)}-\Pi_{\mathrm{span}\{A p_n,\dots, A p_{n-j}\}} r_n^{(k)},
\ees
where $j=\min(n,k-1)$. Hence,
\bes
\nnorm{{r}_{n+1}^{(k)}} \le \nnorm{{r}_n^{(k)}- {v}}\ ,\ \ \forall {v}\in \mathrm{span}\{A p_n,\dots, A p_{n-j}\}\ .
\ees
%From the construction of the search direction we have
Since
$
{r}_n^{(k)}\in \mathrm{span}\{{p}_n,\dots, {p}_{n-j}\}
$, we have 
\bes
A r_n^{(k)}\in \mathrm{span}\{{A p}_n,\dots, {A p}_{n-j}\}\ .
\ees
Therefore
\bes
\nnorm{{r}_{n+1}^{(k)}}\le \nnorm{{r}_n^{(k)}-z_0^{-1}A r_n^{(k)}} = 
\frac{\rho}{\abs{z_0}}\nnorm{U r_n^{(k)}}\le \frac{\rho}{\abs{z_0}}\nnorm{{r}_n^{(k)}}\ .
%\qquad\endproof
\ees
\end{proof}
Cf.~\cite{MR98b:47008}, if $A$ is normal, then $\op{F}({A})$ is equal to the convex hull  $\sigma({A})$.
Therefore,~\eqref{eq:speccond} is equivalent to
$\op{F}({A})\subseteq \overline{\op{B}_{\rho}(z_0)}$.
Hence, the general result~\eqref{omin:eq:om1descent} implies
\be
\label{eq:estclassic}
\nnorm{{r}_{n+1}^{(k)}}\le \sqrt{1-\frac{(\abs{z_0}-\rho)^2}{(\abs{z_0}+\rho)^2}}\:\nnorm{{r}_n^{(k)}}\  = 
\frac{2\sqrt{\rho/\abs{z_0}}}{1+\rho/\abs{z_0}}\:\nnorm{{r}_n^{(k)}}.
\ee
The bound~\eqref{eq:convnormal}, valid for normal operators only, is sharper than~\eqref{eq:estclassic}.

%%%%%%%%%%%%
% subsection
%%%%%%%%%%%%
\subsection{Sharpness of the upper bound}
\label{ssec:sharpness}
To show that the estimate~\eqref{eq:convnormal} is sharp we consider the diagonal  matrices 
\be
\label{eq:Udef}
U=\mathrm{diag}\lbrack 1,\zeta_d,\zeta_d^2,\dots,\zeta_d^{d-1}\rbrack\ ,
\ee
where $\zeta_d=\exp(2\pi {\bf i}/d)$ is the primitive root of unity of order $d$. 
%Without loss of generality we take $z_0=1$.
%
\begin{conjecture}
\label{conj:o1tokexamples}
For all $k\in\N$, there exists $d_k\in\N$ and  $0<\rho_k<1$ so that
for all $\rho\in(0,\rho_k)$ and $d\ge d_k$, the residuals $r_n^{(k)}$
obtained by applying the Orthomin$(k)$ iteration to 
the system~\eqref{eq:sysmainex} with $U$ of the form~\eqref{eq:Udef}  and
initial value $x_0=0$ satisfy
\be
\label{eq:o1tokexamples}
\lim_{n\to\infty}\frac{\nnorm{r_{n+1}^{(k)}}}{\nnorm{r_{n}^{(k)}}} = \rho\ .
\ee
\end{conjecture}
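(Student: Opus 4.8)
The plan is to analyze the Orthomin$(k)$ iteration applied to the system $(I+\rho U)x=b$ with $U$ the cyclic shift operator $\mathrm{diag}[1,\zeta_d,\dots,\zeta_d^{d-1}]$ by exploiting the circulant structure. Since $U$ is diagonal with distinct eigenvalues $\zeta_d^j$, the matrix $A=I+\rho U$ is diagonal with eigenvalues $1+\rho\zeta_d^j$, all lying on the circle $\abs{z-1}=\rho$. With $x_0=0$ and $b$ chosen generically (say $b=(1,1,\dots,1)^T$, i.e.\ equal weight on all eigenvectors, which is the natural worst case), the Krylov space is the full space once $n\ge d-1$, but the point is to track the residual ratio as $n\to\infty$. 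First I would recast everything in the eigenbasis: write $r_n^{(k)}=\sum_j c_{n,j} e_j$, so that the Orthomin relations~\eqref{eq:aperp} become a system of algebraic recursions in the coefficients, with the inner products turning into weighted sums $\sum_j \abs{c_{n,j}}^2 \overline{(1+\rho\zeta_d^j)}(\dots)$ over the $d$-th roots of unity.

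The key structural observation to establish is a \emph{translation symmetry}: multiplying a vector componentwise by the eigenvalue vector $(1+\rho\zeta_d^j)_j$ corresponds, up to the unitary factor $U$, to a cyclic relabeling of indices. I expect that the Orthomin$(k)$ iteration, started from the symmetric initial residual, converges to a limit cycle (or a fixed behavior) in which the residual vectors $r_n^{(k)}$ become, asymptotically, a fixed ``shape'' modulated by a geometric factor. The next step is to identify that asymptotic shape by passing to a limiting/stationary version of the recursion — essentially seeking a generalized eigenvector of the iteration map. For $k=1$ this should reduce to a scalar fixed-point problem for the ratio $q_n$, whose limit one computes explicitly and shows equals $\rho$ (in the regime $d\to\infty$, $\rho$ small, so that the discrete set of roots of unity behaves like the full circle and the relevant sums become integrals $\frac{1}{2\pi}\int_0^{2\pi}(\cdot)\,d\theta$ over the eigenvalue circle). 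For $k\ge 2$ one must show the extra search directions do not help asymptotically, i.e.\ the minimization over the larger space $V_k^{(n)}$ gains nothing in the limit — this is where the conjecture is only supported numerically.

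Concretely, the steps in order are: (1) diagonalize $A$ and write the Orthomin$(k)$ recursion~\eqref{eq:orthominxr}--\eqref{eq:orthomincoeffa} in coordinates; (2) prove the cyclic translation symmetry of the iteration under the symmetric start; (3) show the ratios $q_n$ converge, by exhibiting a compact invariant region for the normalized residuals and a contraction-type argument, or by identifying the iteration with a dynamical system on projective space that has an attracting fixed point; (4) compute the limiting ratio by replacing the sum over $d$-th roots of unity with its $d\to\infty$ integral limit and solving the resulting stationary equation, obtaining the value $\rho$ for $\rho<\rho_k$; (5) for $k\ge 2$, argue that the asymptotic search directions $p_n,\dots,p_{n-k+1}$ become (nearly) aligned with the single optimal direction in the limit, so the $k$-dimensional minimization degenerates to the $1$-dimensional one. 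The main obstacle — and the reason Conjecture~\ref{conj:o1tokexamples} is stated as a conjecture rather than a theorem for $k\ge 2$ — is precisely step (5): controlling the limiting geometry of the bundle of search directions and ruling out that the extra directions provide an asymptotically nonvanishing improvement. For $k=1$ this obstacle disappears, and the remaining difficulty is the careful asymptotic analysis in step (4) showing that the discrete-to-continuous limit is uniform enough to pin the rate at exactly $\rho$; this is carried out in Section~\ref{sec:o1convergence}.
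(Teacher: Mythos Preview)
Your outline recognizes correctly that the full conjecture is only established for $k=1$ (Theorem~\ref{th:ortho1proof}), and that step~(5) is the genuine obstruction for $k\ge 2$. But for $k=1$ your sketch diverges from the paper's actual argument in two substantive ways, and one of them is a real gap.

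First, the paper does \emph{not} prove convergence of $q_n$ by a compact-invariant-region or projective-contraction argument. Instead it shows directly (Proposition~\ref{prop-monotone}, via the variance inequality of Lemma~\ref{lma:pearson}) that $q_n$ is \emph{monotone increasing} for any normal $A$; this is an unexpected and nontrivial fact that you do not anticipate. Monotonicity plus the upper bound $q_n\le\rho$ from Theorem~\ref{th:convnormal} gives convergence of $q_n$ for free, and the remaining work is to identify the limit.

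Second---and this is the gap---your step~(4) proposes to compute the limiting ratio by ``replacing the sum over $d$-th roots of unity with its $d\to\infty$ integral limit.'' That would at best yield $\lim_{d\to\infty}\lim_{n\to\infty}q_n(d)=\rho$, whereas the conjecture requires $\lim_{n\to\infty}q_n(d)=\rho$ for each \emph{fixed} $d\ge d_k$. The two limits do not commute automatically, and the paper does not attempt any such interchange. Instead, it introduces the scalar quantities $\beta_n=1-\lambda_n$, $u_n=\omega_{n,1}$, $v_n=\omega_{n,2}$ and derives explicit coupled recursive inequalities (Proposition~\ref{andrei_inequality}) that hold for every $d$; then, using only that the first three initial moments $\omega_{0,1},\omega_{0,2},\omega_{0,3}$ vanish (which needs $d\ge4$), an induction (Proposition~\ref{prop:ortho1technical}) forces $|\beta_n|\le\rho^{n+2}$ for $\rho<0.1$. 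The infinite-dimensional case (Theorem~\ref{thm:infdim}) is handled separately, and by yet another method: an explicit closed form $T_n=\rho^{2n+1}$ obtained from MacMahon's $q$-series identity.

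Finally, your ``cyclic translation symmetry'' is not used anywhere in the paper and, as stated, is not correct: $U$ here is diagonal, not a shift, and conjugating $A=I+\rho U$ by the index-shift permutation gives $I+\rho\zeta_d^{-1}U\ne A$. What the roots-of-unity structure actually buys is the vanishing of the low-order initial moments $\omega_{0,j}$, which is exactly the input the inductive estimate needs.
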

In this article we prove  Conjecture~\ref{conj:o1tokexamples} in the case when $k=1$ 
(see Theorem~\ref{th:ortho1proof} in Section~\ref{ssec:convfindim}). 
For $k\ge 2$, the numerical evidence in support of Conjecture~\ref{conj:o1tokexamples} is quite strong, 
as shown  in {\color{black}\ref{sec:numresults}}. A consequence of Conjecture~\ref{conj:o1tokexamples} is that
for a given $k\in \N$ we can find linear systems for which all of Orthomin($j$), $j=1,\dots, k$, achieve the
same convergence rate. {\color{black}This establishes why there is no {\bf general} result, e.g.,  
valid for all matrices $A$ for with $0\notin \op{F}(A)$, showing that Orthomin($k$) converge faster than
Orthomin(1)}.  Naturally, for any system in $\C^d$, Orthomin($d$) will converge in at most $d$ steps; {\color{black}
  therefore, it is necessary that $d_k> k$ in order for~\eqref{eq:o1tokexamples} to make sense. This is not a sufficient condition, 
  as seen from Proposition~\ref{prop:o1d2}, where it is shown that $d_1\ge 3$ is necessary for~\eqref{eq:o1tokexamples} to hold.
  In addition, for Orthomin($1$), Conjecture~\ref{conj:hull} suggests that ${\rho}_k=\cos(\pi/d_k)$, which represents the radius of the
  circle inscribed in the regular polygon formed by the roots of unity or order $d_k$. If Orthomin($1$) serves as a guide, 
  this value for ${\rho}_k$ may be a good choice for Orthomin($k$) as well.
}

%Naturally, for any system in $\C^d$, Orthomin($d$) will converge in at most $d$ steps. In order
%to find linear systems so that for {\bf all} $k\in \N$, Orthomin($k$) achieves the same asymptotic convergence rate as
%Orthomin(1), we consider the  infinite dimensional generalization of~\eqref{eq:Udef}.
%Let $d\mu_0(z)$  be the Haar probability measure on the unit circle~$\mathbb S^1$, and consider  the operator
%\begin{equation}
%\label{eq:Udefcont}
%U:L^2(\mathbb S^1, d\mu_0)\to L^2(\mathbb S^1, d\mu_0),\quad
%U f(z)= z f(z)\ ,
%\end{equation}
%and the linear system~\eqref{eq:sysmainex} with right-hand side
%$b\in L^2(\mathbb S^1, d\mu_0)$ given by 
%$$b(z)=1, \ \ \forall z\in \mathbb S^1\ .$$
%%
%\begin{conjecture}
%\label{conj:infiniteexample}
%For all $0<\rho<1$ and $k\in\N$ the residuals $r_n^{(k)}$
%obtained by applying the Orthomin$(k)$ iteration to 
%the system~\eqref{eq:sysmainex} with $U$ of the form~\eqref{eq:Udefcont}, $b$ as above and zero initial
%guess satisfy
%\be
%\label{eq:ocontexamples}
%\lim_{n\to\infty}\frac{\nnorm{r_{n+1}^{(k)}}}{\nnorm{r_{n}^{(k)}}} = \rho\ .
%\ee
%\end{conjecture}
%%
%In Section~\ref{sssec:infdim} we prove Conjecture~\ref{conj:infiniteexample} for $k=1$.

\subsection{Connection with numerical partial differential equations}
\label{sssec:pde}
Systems of the form~\eqref{eq:sysmainex}  arise naturally in the
numerical solution of partial differential equations (PDEs). Consider the steady-state advection-reaction-diffusion
equation on $[0,2 \pi]$
\be
\label{eq:secorderpde}
-a u''(x) + b u'(x) + c u(x) = f(x)\ ,\ \ a>0,\  c\ge 0,\  b\in \R\ ,
\ee
with periodic boundary conditions $u(0)=u(2 \pi),\ u'(0)=u'(2\pi)$. To obtain a discretization of~\eqref{eq:secorderpde}
we proceed as follows: set $x_j=j h$, $j=0,1,\dots, d$, $h=2\pi/d$, be a uniform grid
(we identify $x_0$ with $x_d$, $x_{-1}$ with $x_{d-1}$, and $x_{1}$ with $x_{d+1}$), and replace the 
derivatives in~\eqref{eq:secorderpde} with the usual centered difference formulas
$$
-u''(x_j)\approx\frac{2 u(x_j)-u(x_{j-1})-u(x_{j+1})}{h^2}\ ,\ \ \ u'(x_j)\approx\frac{u(x_{j+1})-u(x_{j-1})}{2 h}\ .
$$
The  resulting discretization\footnote{This 
particular discretization is not appropriate for advection-dominated problems.}
 is a linear system of type~\eqref{omin:eq:gensys}: 
$A$ is a normal matrix with orthogonal eigenvectors $\chi^{(k)}\in \C^d$  and corresponding eigenvalues $\lambda_k$ given by
$$\chi^{(k)}_j = \exp({\bf i} k j h)\ ,\ \ \lambda_k = -\frac{2 a}{h^2}\cos(k h)+ 
{\bf i} \frac{b}{h}\sin(k h) + c+\frac{2 a}{h^2}\ .$$
The eigenvalues lie on an ellipse with semi-axes $2 a/h^2$ and $b/h$; when $2 a=h b$ this is a circle
of radius $b/h$. After further rescaling, the system can be brought to  the form~\eqref{eq:sysmainex}.
However, as will be shown in Section~\ref{sec:ellipses}, this example is quite relevant to 
the convergence study of Orthomin$(k)$ also when $2 a\ne h b$.

\section{Further examples: normal matrices with spectra on ellipses}
\label{sec:ellipses} So far we have examined the systems~\eqref{eq:sysmainex}, and  we conjectured
 that for any $k\in\N$ we can find operators $U$ of the form~\eqref{eq:Udef} so that for 
all  $1\le j\le k$, Orthomin($j$) achieves an asymptotic convergence rate equal to $\rho$. 
After a trivial rescaling, we restate Conjecture~\ref{conj:o1tokexamples} in the  following way:
for any circle $\op{C}$ of center $z_0$ and radius $\rho$ satisfying $0<\rho<\abs{z_0}$
there exists a normal matrix $A$ whose spectrum lies
on~$\op{C}$ so that for all $1\le j\le k$, the Orthomin($j$) iteration applied to the system~\eqref{omin:eq:gensys}
with $b=(1,1,\dots,1)^T$ and zero initial guess has an asymptotic convergence rate of~$\rho/\abs{z_0}$.

In this section we show numerical evidence suggesting that if we replace
the circle~$\op{C}$ with a non-circular ellipse~$\op{E}$ in the example above,
all Orthomin($j$) with $k\ge 2$ achieve {\bf the same} asymptotic convergence rate~$\rho_{\op{E}}$,
which is smaller than the asymptotic convergence rate of Orthomin(1). For the exact formulation see
Conjecture~\ref{conj:ellipsedisc}. We remark that the discretized numerical PDE from Section~\ref{sssec:pde} 
is an example of precisely such a system.

In order to make the examples very specific we first describe an ellipse~$\op{E}$ by its semi-axes $\alpha>0$ and $\beta>0$, 
the angle $\theta\in \R$ between its axes and the coordinate axes, and the position $u\in \C$  of its center:
\begin{equation}
  \label{eq:ellipsedef}
  \op{E}=\left\{u+e^{{\bf i} \theta} (\alpha \cos \gamma + {\bf i} \beta \sin \gamma)\ :\ \gamma\in [0,2\pi]\right\}\ .
\end{equation}
It is assumed that $0$ does {\color{blue}\bf not} lie on or inside  $\op{E}$. % nor its interior  contain the origin. 
For $d\in \N$ we consider the numbers $\mu_j\in \op{E}$ defined 
as
\begin{equation}
  \label{eq:muellipse}
  \mu_j=u+e^{{\bf i} \theta} \left(\alpha \cos \frac{2\pi j}{d} +
     {\bf i} \beta \sin \frac{2\pi j}{d}\right)\ ,\ \ j=1,\dots, d\ .
\end{equation}
As before, we associate to $\op{E}$ a linear operator
$$A_{\op{E},d} \stackrel{\mathrm{def}}{=} \mathrm{diag}[\mu_1,\dots,\mu_d]\ .$$
%To construct an analogous continuous-space operator let  $d\mu_{\op{E}}(z)$ denote the arc-length measure on $\op{E}$. 
%Define
%\begin{equation}
%  \label{eq:Adefcontellipse}
%  A_{\op{E}}:L^2(\op{E}, d\mu_{\op{E}})\to L^2(\op{E}, d\mu_{\op{E}}),\quad
%  A_{\op{E}} f(z)= z f(z)\ .
%\end{equation}

\begin{conjecture}
\label{conj:ellipsedisc}
For any ellipse $\op{E}$  there exists a number $\rho_{\op{E}}\in (0,1)$
so that the following hold:\\
\begin{itemize}
\item[\textnormal{(}i\textnormal{)}] For all $k\in \N$ with  $k\ge 2$, there exists $d_k\in \N$ so that for $d\ge d_k$ 
  the ratio $q_n=\nnorm{r_{n+1}^{(k)}}/\nnorm{r_{n}^{(k)}}$ of the residual-norm obtained by applying the Orthomin$(k)$ iteration
  with zero initial guess to the system 
  \begin{equation}
    \label{eq:linsysellipse}
    A_{\op{E},d} x = (1,1,\dots,1)^T 
  \end{equation}
  satisfies
  \begin{equation}
    \label{eq:qntorho}
    \lim_{n\to\infty} q_n = \rho_{\op{E}}\ .
  \end{equation}
%\item[\textnormal{(}ii\textnormal{)}] For any $k\ge 2$ the same ratio obtained by applying the Orthomin$(k)$ iteration
%  with zero initial guess to the continuous system 
%  $$A_{\op{E}} x = 1$$
%  also satisfies~\eqref{eq:qntorho}.
\item[\textnormal{(}ii\textnormal{)}] If the ellipse is not circular, then $\rho_{\op{E}}$ is smaller than the asymptotic convergence rate of Orthomin$(1)$.
\end{itemize}
\end{conjecture}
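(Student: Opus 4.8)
\textbf{Proof strategy for Conjecture~\ref{conj:ellipsedisc}.}
The plan is to recast Orthomin($k$) as a polynomial recurrence attached to the spectral measure of the (normal) matrix and to read off $\lim_n q_n$ from ratio asymptotics of those polynomials, which for spectra lying on a curve are controlled by the exterior conformal map of the curve. Let $K$ be the filled ellipse bounded by $\op{E}$, let $\Omega=\widehat{\C}\setminus K$ be its exterior, and let $\Phi:\{\abs{\zeta}>1\}\to\Omega$ be the conformal map with $\Phi(\infty)=\infty$, so that $g(z)=\log\abs{\Phi^{-1}(z)}$ is the Green's function of $\Omega$ with pole at infinity. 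Since neither $\op{E}$ nor its interior contains the origin, $0\in\Omega$, and the natural candidate for the limiting rate is $\rho_{\op{E}}=e^{-g(0)}=\abs{\Phi^{-1}(0)}^{-1}\in(0,1)$, the optimal (GMRES) polynomial rate on $\op{E}$; it specializes to $\rho/\abs{z_0}$ when $\op{E}$ is the circle $\abs{z-z_0}=\rho$, consistent with Theorem~\ref{th:convnormal} and with the $k=1$ instance of Conjecture~\ref{conj:o1tokexamples}.

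\emph{Reduction to a measure on the ellipse; parts (i) and (ii).}
Because $A_{\op{E},d}=\mathrm{diag}[\mu_1,\dots,\mu_d]$ is normal with eigenvectors $e_j$ and $b=\sum_j e_j$, the vectors generated by~\eqref{eq:orthominxr}--\eqref{eq:orthomincoeffa} have the form $r_n^{(k)}=\sum_j\phi_n(\mu_j)e_j$ and $p_n=\sum_j\psi_n(\mu_j)e_j$ with $\phi_n,\psi_n$ polynomials of degree $\le n$ and $\phi_n(0)=1$; every inner product in~\eqref{eq:orthomincoeffa} becomes an integral against the empirical measure $\nu_d=\tfrac1d\sum_j\delta_{\mu_j}$, and $q_n=\nnorm{\phi_{n+1}}_{L^2(\nu_d)}/\nnorm{\phi_n}_{L^2(\nu_d)}$. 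The same reduction applies to $A_{\op{E}}$ with $\nu_d$ replaced by arc-length measure on $\op{E}$, and $\nu_d$ converges weakly as $d\to\infty$ to the push-forward under~\eqref{eq:muellipse} of normalized Lebesgue measure on $[0,2\pi]$ (which need not equal arc-length for a non-circular ellipse). The first task --- in my estimate the more accessible half --- is to prove that for any sufficiently regular measure $\nu$ on $\op{E}$ (say, absolutely continuous with respect to arc length with density bounded above and below) the limit $\lim_n q_n$ exists and equals $\rho_{\op{E}}$ independently of $\nu$; this is the classical phenomenon that ratio asymptotics of orthogonal and of kernel polynomials on a smooth Jordan curve are governed by $\Phi$ alone. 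Once this is in hand, parts (i) and (ii) coincide, and the threshold $d\ge d_k$ in (i) is merely the requirement that $\nu_d$ carry enough mass points that the finite termination of Orthomin($k$) does not preempt the asymptotic regime.

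\emph{Orthomin(2), and the reduction of $k\ge 3$ to $k=2$.}
Specializing~\eqref{eq:orthominp}--\eqref{eq:orthomincoeffa} to $k=2$ yields the coupled recurrence $\phi_{n+1}=\phi_n-\lambda_n z\psi_n$, $\psi_{n+1}=\phi_{n+1}-\nu_n\psi_n$, with $\lambda_n,\nu_n$ pinned down by $\innprd{\phi_{n+1}}{z\psi_n}_{L^2(\nu)}=0$ and $\innprd{z\psi_{n+1}}{z\psi_n}_{L^2(\nu)}=0$. The crux is to show that the recurrence coefficients converge, $\lambda_n\to\lambda_\infty$ and $\nu_n\to\nu_\infty$, with limits expressible through $\Phi$ and $\Phi^{-1}(0)$ --- a Rakhmanov/Denisov-type ratio-asymptotics statement, but adapted to the non-Hermitian Orthomin bilinear pairing rather than to a positive inner product, so the standard orthogonal-polynomial machinery has to be reworked. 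Granting this, $(\phi_n,\psi_n)$ obeys an asymptotically constant-coefficient recursion whose transfer matrix has dominant eigenvalue of modulus $\rho_{\op{E}}$, giving $\lim_n q_n=\rho_{\op{E}}$; checking that this eigenvalue is exactly $\abs{\Phi^{-1}(0)}^{-1}$ is the step expressing that the Orthomin(2) residual polynomials are asymptotically as efficient as the GMRES kernel polynomials on $\op{E}$. For $k\ge 3$ one must then show that the surplus coefficients $\nu_n^{(j)}$, $j\ge 2$, in~\eqref{eq:orthominp} tend to $0$, so that Orthomin($k$) is asymptotically driven by the very same $k=2$ recurrence. The mechanism is that $\op{E}$ is a Joukowski image of the unit circle, so on $\op{E}$ the antiholomorphic coordinate $\bar z$ differs from an affine function of $z$ only by a term which, measured on the Krylov scale, is $O(\rho_{\op{E}}^{\,2n})$; equivalently $Ap_{n+1}$ becomes asymptotically orthogonal to each $Ap_{n-j}$, $j\ge 2$, of its own accord, leaving the long tail of~\eqref{eq:orthominp} inert. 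Converting this heuristic into a uniform-in-$n$ bound on $\abs{\nu_n^{(j)}}$ is, I expect, the main obstacle of the whole program: unlike CG or GMRES, Orthomin($k$) with $k\ge 2$ rests neither on an exact short recurrence (no Faber--Manteuffel identity holds on a genuine ellipse) nor on a global optimality property, so the decay of the tail must be tracked by hand through the recursion --- which is precisely the gap that keeps the $k\ge 2$ assertion conjectural.

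\emph{Part (iii): strict improvement over Orthomin(1).}
The Orthomin(1) rate on $\op{E}$ is analyzed exactly as in Theorem~\ref{th:ortho1proof}: the residual is $\prod_{i<n}(1-\lambda_i z)$ with $\lambda_i=\innprd{r_i}{Ar_i}/\nnorm{Ar_i}^2$ chosen greedily, the $\lambda_i$ converge (to a single value, or to a short periodic cycle, according to $\op{E}$), and the rate $\rho^{(1)}_{\op{E}}$ is the geometric mean of $\max_{z\in\op{E}}\abs{1-\lambda^{(i)}_\infty z}$ over a period. The bound $\rho_{\op{E}}\le\rho^{(1)}_{\op{E}}$ follows from the analysis above, since the Orthomin(2) residuals are asymptotically no worse than any product of degree-one factors. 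Strictness when $\op{E}$ is not a circle comes from the fact that $\rho_{\op{E}}=e^{-g(0)}$ is the optimal polynomial-growth rate: equality would force a product of affine factors to reproduce the growth of $\abs{\Phi^{-1}}$ across the level curves of $g$, yet each factor $1-\lambda z$ has circular modulus-level sets and hence cannot be tangent to infinite order to a non-circular level set $\{g=\mathrm{const}\}$; a strict-subharmonicity comparison of $\log\abs{1-\lambda z}$ with $g$ in a neighbourhood of $\op{E}$ then forces the inequality to be strict. When $\op{E}$ is a circle the level sets coincide, $\rho_{\op{E}}=\rho^{(1)}_{\op{E}}=\rho/\abs{z_0}$, and one recovers Conjecture~\ref{conj:o1tokexamples}.
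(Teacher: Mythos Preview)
The first thing to note is that the paper does \emph{not} prove Conjecture~\ref{conj:ellipsedisc}; it is stated as a conjecture and supported solely by numerical experiments (Figures~\ref{fig:ellipse128} and~\ref{fig:qn128ellipse}). In fact the authors write explicitly that ``in spite of the fact that $\rho_{\op{E}}$ seems to be intimately related to the ellipse, currently we do not understand the nature of this connection, i.e., how to compute $\rho_{\op{E}}$ using only information about~$\op{E}$.'' So there is no ``paper's own proof'' to compare your proposal against; you are attempting something the paper leaves entirely open.

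Your write-up is candid about being a strategy rather than a proof, and you correctly flag the central obstruction: Orthomin($k$) for $k\ge 2$ enjoys neither a global optimality principle (GMRES) nor an exact short recurrence (CG/Faber--Manteuffel), so the claimed decay $\nu_n^{(j)}\to 0$ for $j\ge 2$ must be extracted directly from the recursion. That is indeed the crux, and nothing in your outline bridges it; the Joukowski heuristic that $\bar z$ is ``nearly affine in $z$'' on $\op{E}$ is suggestive but does not by itself control a ratio of inner products whose denominator could be small.

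There is, however, a more concrete concern with your identification $\rho_{\op{E}}=e^{-g(0)}=\abs{\Phi^{-1}(0)}^{-1}$. For the paper's test ellipse ($\alpha=2$, $\beta=1$, $u=2+{\bf i}$, $\theta=\pi/6$) one has $\Phi(\zeta)=u+e^{{\bf i}\theta}\bigl(\tfrac{3}{2}\zeta+\tfrac{1}{2\zeta}\bigr)$, and solving $\Phi(\zeta)=0$ gives $\abs{\Phi^{-1}(0)}\approx 1.222$, hence $e^{-g(0)}\approx 0.818$. The paper reports $\rho_{\op{E}}\approx 0.6891$ (to eight digits, and stable under random right-hand sides), and an Orthomin(1) rate of about $0.7902$. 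Both of these lie \emph{below} $0.818$. For finite $d$ this is not a logical contradiction (GMRES terminates, so there is no asymptotic lower barrier), but it does mean that the observed $\rho_{\op{E}}$ cannot be the exterior Green's-function value, and it casts doubt on part~(ii) of your scheme: in the genuinely infinite-dimensional setting, every Orthomin($k$) residual dominates the GMRES residual, whose $n$-th-root rate \emph{is} $e^{-g(0)}$, so the continuous Orthomin($k$) rate cannot be smaller than $0.818$. Either the finite-$d$ limit differs from the continuous one (contradicting the conjecture as stated), or $\rho_{\op{E}}$ is governed by a mechanism other than the exterior conformal map. Before investing in the ratio-asymptotics program, you should test your candidate formula against the paper's numerics directly.
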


Two facts are notable about the behavior of Orthomin$(k)$ for the systems in Conjecture~\ref{conj:ellipsedisc}.
First, it is remarkable that the ratios $q_n$ converge at all; indeed, we show that for~$k=1$ the sequence~$\{q_n\}_{n\in \N}$ 
is convergent regardless of the choice of the numbers $\mu_1,\dots,\mu_d$,
 but for $k\ge 2$ the sequence $\{q_n\}_{n\in \N}$  may not be monotone, and is not expected to converge in general.
The second interesting fact is that all Orthomin$(k)$ with $k\ge 2$ achieve the same asymptotic convergence rate for sufficiently large~$d$. 
Moreover, numerical experiments show that $q_n$ converges to the same limit~$\rho_{\op{E}}$ even for a random initial guess and right-hand side $b$.
However, in spite of the fact that $\rho_{\op{E}}$ seems to be intimately related to the ellipse, currently we do not
 understand the nature of this connection, i.e., how to compute $\rho_{\op{E}}$ using only information about~$\op{E}$.

We conclude this section by showing numerical evidence in support of Conjecture~\ref{conj:ellipsedisc}. For numerical experiments we have selected an ellipse
in general position (not aligned with the coordinate axes) with  $\alpha=2$, $\beta=1$, $u=2+{\bf i}$, and $\theta=\pi/6$.
% (see Figure~\ref{fig:ellipse128}). 
For $d=128$ we solved the system~\eqref{eq:linsysellipse} using Orthomin$(k)$ with $k=1, 2, 3, 4, 10$. In Figure~\ref{fig:qn128ellipse} 
we plot the ratios~$q_n$ for each of the solves. The data strongly suggests that for $k=2, 3, 4, 10$ we have
$$q_n\to \rho_{\op{E}}\approx 0.6891227\ .$$
This approximate value (up to the first eight digits)
was also obtained when solving~\eqref{eq:linsysellipse} with random right-hand side and initial guess. 
In the particular case of
Orthomin(1), we know that  $q_n$ is convergent (and  increasing): numerically we find that  $\lim q_n\approx 0.7902$.

%%%%%%%%%%%%%%%%%%%%%%%%%%%%%%%%%%%%%%%%%%%%%%%%%%%%%%%%%% 
%\begin{figure}[!htb]
%\begin{center}
%\includegraphics[width=\textwidth]{ellipse128.pdf}    
%\caption{The ellipse with semi-axes $\alpha=2$, $\beta=1$, $u=2+{\bf i}$, and $\theta=\pi/6$ ($d=128$).} 
%\label{fig:ellipse128}
%\end{center}
%\end{figure}

%%%%%%%%%%%%%%%%%%%%%%%%%%%%%%%%%%%%%%%%%%%%%%%%%%%%%%%%%% 
\begin{figure}[!htb]
\begin{center}
\includegraphics[width=\textwidth]{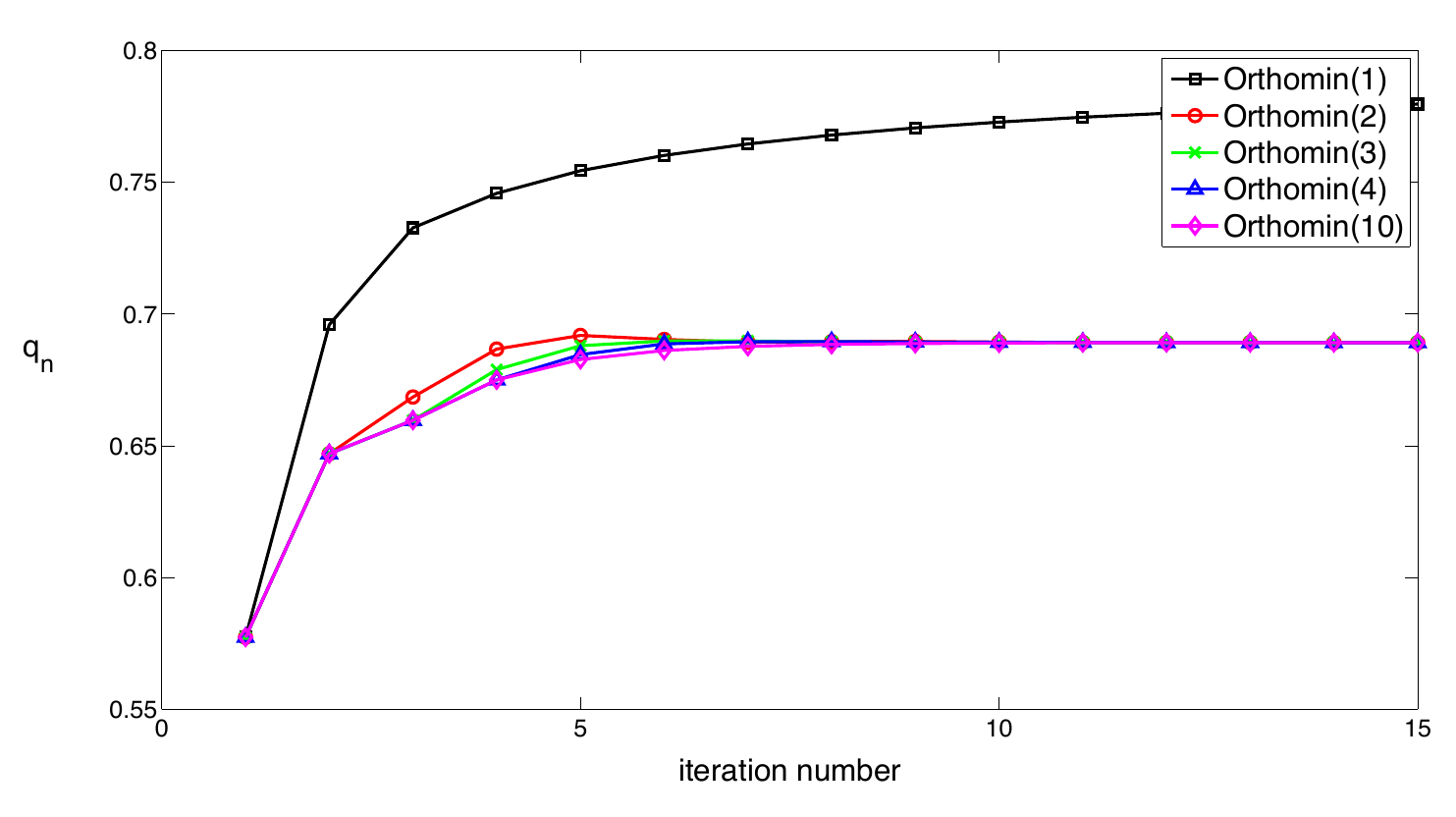}    
\caption{The comparative residual norms for Orthomin(k) ($k=1,2,3,4,5,10$): for Orthomin$(1)$
$q_n$ exceeds $0.7902$, but for $k=2, 3, 4, 5, 10$ we note a convergence of $q_n$ to a value near $0.6891227$.} 
\label{fig:qn128ellipse}
\end{center}
\end{figure}

%%%%%%%%%%%%%%%%%%%%%%%%%%%%%%%%%%%%%%%%%%%%%%%%%%%%%%%%%% 
\section{Convergence analysis for Orthomin(1)}
\label{sec:o1convergence}
The main objective of  this section is to prove 
Conjecture~\ref{conj:o1tokexamples} 
%and~\ref{conj:infiniteexample} 
for $k=1$.
In Section~\ref{ssec:monotonicity} we show that the sequence $\{q_n\}_{n\in\N}$
is increasing and bounded. After stating in Section~\ref{ssec:abszetaisone} 
a few technical results, we discuss in Section~\ref{ssec:nonconvergence} examples 
when $q_n$ does not converge to~$\rho$. The behavior of~$q_n$
for  two-dimensional systems is presented in Section~\ref{ssec:sis2}. 
In Section~\ref{ssec:convfindim} we prove Conjecture~\ref{conj:o1tokexamples} 
for $k=1$. 
%Sections~\ref{sssec:infdim} and~\ref{ssec:connjacobi} are devoted to the 
%infinite-dimensional case (Conjecture~\ref{conj:infiniteexample}).

We consider matrices of the form 
\be
\label{eq:AgeneralO1}
A=\mathrm{diag}[\mu_1,\dots, \mu_d]\ ,
\ee
with $\mu_1,\dots, \mu_d\in\C$ nonzero complex numbers. Since we are interested
in the evolution of the residuals, we retain only the recursive equation from Orthomin(1) 
that produces the residual $r_n=r_n^{(1)}$:
\begin{equation}\label{eq:rdefo1}
r_{n+1}=r_n-\Pi_{A r_n}r_n\ ,
\end{equation} 
with $r_0\in\C^d$ being chosen arbitrarily. Recall from~\eqref{eq:orthomincoeffa} and~\eqref{eq:orthominxr} that
$$
\lambda_n=\frac{(r_n, Ar_n)}{(A r_n, A r_n)}\ ,
\ \ r_{n+1}=r_n-\lambda_nAr_n\ .
$$
Let $r_n=(r_n^1, \dots r_n^d)$ be the coefficients of $r_n$. We consider the 
finite probability measure supported at $1,\dots, d$ with weights
proportional to $|r_n^1|^2$, \dots, $|r_n^d|^2$.
We will refer to it as the $r_n$-measure, and use the subscript $n$ to denote it.  
For instance, the expected value of a vector $\xi=(\xi_1,\dots, \xi_d)$ 
with respect to this measure is
$$\EE_n(\xi):=\frac{\sum_{k=1}^d\xi_k|r_n^k|^2}{\sum_{k=1}^d|r_n^k|^2}\ .$$
Since $r_{n+1}=r_n-\lambda_nAr_n$ has coefficients
$r_{n+1}^k=(1-\lambda_n\mu_k)r_n^k$,  the following change of variable formula holds:
\begin{equation}
\label{eq:cofv}
\EE_{n+1}(\xi)=\frac{\EE_n(\xi|1-\lambda_n\mu|^2)}
{\EE_n(|1-\lambda_n\mu|^2)}\ ,
\end{equation}
where
$\mu=(\mu_1,\dots, \mu_d)$ is the vector of eigenvalues of $A$.
In particular, 
\begin{equation}
\label{eq:lambdan}
\lambda_n
=\frac{\sum_{k=1}^d\bar\mu_k|r_n^k|^2}{\sum_{k=1}^d|\mu_k|^2|r_n^k|^2}=
\frac{\EE_n(\bar\mu)}{\EE_n(|\mu|^2)}\ .
\end{equation}
%%%%%%%%%%%%%%%%%%%%%%%%%%%%%%%%%%%%%%%%%%%%%%%%%%%%%%%%%% 
%%%%%%%%%%%%%%%%%%%%%%%%%%%%%%%%%%%%%%%%%%%%%%%%%%%%%%%%%% 
\subsection{Monotonocity of $q_n$}
\label{ssec:monotonicity}
%%%%%%%%%%%%%%%%%%%%%%%%%%%%%%%%%%%%%%%%%%%%%%%%%%%%%%%%%% 
We begin with a technical result.
\begin{lemma}
\label{lma:pearson}
Let $\xi$ a complex-valued random variable with finite moments up to order $4$ satisfying 
the identity $\EE(\xi)=\EE(|\xi|^2)$. The following inequality then holds:
\begin{equation}
\EE(|\xi|^2)\:\EE(|1-\xi|^2)\:\EE(|\xi|^2|1-\xi|^2)
\geq |\EE(\xi|1-\xi|^2)|^2\ .
\end{equation}
\end{lemma}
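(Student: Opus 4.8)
\emph{Plan of proof.} The plan is to recognize the moment identity $\EE(\xi)=\EE(|\xi|^2)$ as an orthogonality relation in $L^2$ and then to deduce the inequality from a single application of the Cauchy--Schwarz inequality for covariances of complex random variables.

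Set $\eta:=1-\xi$, so that $\xi+\eta=1$ and $|1-\xi|^2=|\eta|^2$. First I would observe that
\[
\EE(\xi\bar\eta)=\EE\bigl(\xi(1-\bar\xi)\bigr)=\EE(\xi)-\EE(|\xi|^2)=0 ,
\]
i.e. the hypothesis says exactly that $\xi\perp\eta$. Two consequences follow at once. Since $\EE(\xi)=\EE(|\xi|^2)$ is a nonnegative real number, $\EE(\eta)=1-\EE(|\xi|^2)$ is real, and a direct expansion gives $\EE(|\eta|^2)=\EE(|1-\xi|^2)=1-2\EE(|\xi|^2)+\EE(|\xi|^2)=1-\EE(|\xi|^2)$; hence $\EE(\eta)=\EE(|\eta|^2)=1-\EE(|\xi|^2)$, and therefore
\[
\var(\bar\eta)=\EE(|\eta|^2)-|\EE(\eta)|^2=\EE(|\eta|^2)\bigl(1-\EE(|\eta|^2)\bigr)=\EE(|\xi|^2)\,\EE(|\eta|^2).
\]

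The key step is then to write $\EE(\xi|\eta|^2)$ as a covariance. Using $\EE(\xi\bar\eta)=0$,
\[
\mathrm{Cov}(\xi\bar\eta,\bar\eta)=\EE(\xi\bar\eta\cdot\eta)-\EE(\xi\bar\eta)\,\overline{\EE(\bar\eta)}=\EE(\xi|\eta|^2).
\]
Applying $|\mathrm{Cov}(X,Y)|^2\le\var(X)\,\var(Y)$ (valid for complex random variables with finite second moments, which holds here because $\xi$ has moments up to order $4$) with $X=\xi\bar\eta$ and $Y=\bar\eta$, and using $\var(\xi\bar\eta)=\EE(|\xi|^2|\eta|^2)-|\EE(\xi\bar\eta)|^2=\EE(|\xi|^2|\eta|^2)$ together with the formula for $\var(\bar\eta)$ obtained above, one gets
\[
|\EE(\xi|1-\xi|^2)|^2=|\EE(\xi|\eta|^2)|^2\le\EE(|\xi|^2|\eta|^2)\,\EE(|\xi|^2)\,\EE(|\eta|^2),
\]
which is precisely the claimed inequality.

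I do not expect a genuine obstacle: the only thing one has to spot is the reformulation of the hypothesis as $\xi\perp(1-\xi)$ and the right choice of the pair $(X,Y)=(\xi\bar\eta,\bar\eta)$ in the covariance Cauchy--Schwarz inequality; the rest is bookkeeping. The degenerate cases $\EE(|\xi|^2)=0$ (which forces $\xi\equiv0$) and $\EE(|\xi|^2)=1$ (which forces $\eta\equiv0$, i.e. $\xi\equiv1$), where both sides vanish, are covered automatically by the same argument.
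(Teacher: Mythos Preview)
Your proof is correct. Both your argument and the paper's rest on the same key observation --- that the hypothesis $\EE(\xi)=\EE(|\xi|^2)$ is precisely the orthogonality $\EE(\xi\,\overline{(1-\xi)})=0$, which in turn forces $\var(1-\xi)=\EE(|\xi|^2)\,\EE(|1-\xi|^2)$ --- and then on a single Cauchy--Schwarz step. The paper carries this out by hand: it fixes a phase $\theta$ with $e^{-i\theta}\EE(\xi|1-\xi|^2)\ge 0$, forms the real quadratic $f(t)=\var\bigl(t(1-\xi)+e^{i\theta}\bar\xi(1-\xi)\bigr)$, and reads the inequality off as the nonpositivity of its discriminant. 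You instead invoke the covariance Cauchy--Schwarz inequality $|\mathrm{Cov}(X,Y)|^2\le\var(X)\var(Y)$ directly with $X=\xi\bar\eta$, $Y=\bar\eta$; since $\var(t\bar\eta+e^{-i\theta}\xi\bar\eta)=\var(t\eta+e^{i\theta}\bar\xi\eta)$, the two computations are literally conjugate to one another. Your packaging is cleaner --- naming the orthogonality explicitly and treating the covariance inequality as a black box --- but the underlying argument is the same.
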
   
\begin{proof}
%\indent{\em Proof}.
First of all, we remark that if $\xi$ satisfies the condition stated in the Lemma,
then so does $1-\xi$. Thus, the situation is 
symmetric in $\xi$ and $1-\xi$.  
\\\indent
Let $\theta\in\R$ such that $\EE(\xi|1-\xi|^2)=e^{i\theta}|\EE(\xi|1-\xi|^2)|$. 
Consider the function  
\begin{equation}
f:\R\to\R,\quad 
f(t):=\operatorname{Var}(t(1-\xi)+e^{i\theta}\bar\xi(1-\xi))\ ,
\end{equation}
where Var$(\xi) = \EE(\abs{\xi}^2)-\abs{\EE(\xi)}^2$ denotes the variance of a random variable $\xi$.
By opening up the parenthesis inside the expected value, we obtain
\[\begin{split}
f(t)
&= 
t^2\{\EE(|1-\xi|^2)-|\EE(1-\xi)|^2\}
+2t|\EE(\xi|1-\xi|^2)|
+\EE(|\xi|^2|1-\xi|^2)
\\
&=
t^2\EE(|\xi|^2)\EE(|1-\xi|^2)
+2t|\EE(\xi|1-\xi|^2)|
+\EE(|\xi|^2|1-\xi|^2).
\end{split}\]
The second equality follows fom 
a manipulation of the coefficient of $t^2$ which takes into account
the fact that $\EE(\xi)=\EE(|\xi|^2)$. 
This shows that $f(t)$ is a real valued quadratic form. 
The fact that it is a positive definite quadratic form follows from the fact 
that the variance of a random variable is always a positive number. 
Therefore, $f(t)$ has negative discriminant:
\[
|\EE(\xi|1-\xi|^2)|^2-\EE(|\xi|^2)\EE(|1-\xi|^2)\EE(|\xi|^2|1-\xi|^2)\leq 0\ ,
%\qquad\endproof
\]
which completes the proof.
\end{proof}
We should point out that in the case when $\xi$ is real valued (which is not the case here), 
the statement of Lemma~\ref{lma:pearson}
can be  reduced to Pearson's inequality~\cite{MR0010938} (see also~\cite{MR1791507}) between the skewness $\tau$ and 
the kurtosis $\kappa$ of a distribution:  $$\kappa-\tau^2-1\geq 0.$$ 
We do not give a proof of this fact, as it is of no relevance to the rest 
of the paper.  
%%%%%%%%%%%%%%%%%%%%%%%%%%%%%%%%%%%%%%%%%%%%%%%%%%%%%%%%%% 
%HERE
We now show that the sequence $q_n$ is increasing and bounded.
\label{ssec:monotonicityqn}
\begin{proposition}
\label{prop-monotone}
If $r_n$ is given by~\eqref{eq:rdefo1} and $A$ is defined as 
in~\eqref{eq:AgeneralO1}, then $q_n$ is increasing  and bounded between  $0$ and $1$.
\end{proposition}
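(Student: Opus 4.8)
The plan is to work entirely with the $r_n$-measures $\EE_n$ and the change-of-variable formula \eqref{eq:cofv}. First, boundedness: since $\lambda_n A r_n = \Pi_{Ar_n} r_n$ is an orthogonal projection of $r_n$, we have $\nnorm{r_{n+1}}^2 = \nnorm{r_n}^2 - \nnorm{\lambda_n A r_n}^2 \le \nnorm{r_n}^2$, so $0 \le q_n \le 1$ for all $n$; the strict positivity $q_n>0$ follows because $0\notin\sigma(A)$ means $r_n$ is never an eigenvector-free obstruction — more precisely $\Pi_{Ar_n}r_n \ne r_n$ since $r_n \in \mathrm{span}\{Ar_n\}$ would force $r_n$ to be supported on a single eigenvalue, and one checks the iteration then terminates rather than continues. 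In any case, using \eqref{eq:cofv} with $\xi\equiv 1$ gives the clean formula
\begin{equation}
  q_n^2 = \frac{\nnorm{r_{n+1}}^2}{\nnorm{r_n}^2} = \EE_n\!\left(|1-\lambda_n\mu|^2\right) = 1 - \frac{|\EE_n(\bar\mu)|^2}{\EE_n(|\mu|^2)}\ ,
\end{equation}
where the last equality uses \eqref{eq:lambdan} and expands the square. So $q_n^2 = 1 - |\EE_n(\bar\mu)|^2/\EE_n(|\mu|^2)$, and proving monotonicity reduces to showing that the quantity
\begin{equation}
  S_n := \frac{|\EE_n(\mu)|^2}{\EE_n(|\mu|^2)}
\end{equation}
is nonincreasing in $n$.

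The main step, then, is to prove $S_{n+1} \le S_n$. I would write $S_{n+1}$ using \eqref{eq:cofv}: both numerator and denominator of $S_{n+1}$ are expectations against the reweighted measure with density $|1-\lambda_n\mu|^2$ relative to $\EE_n$. Set $w = 1-\lambda_n\mu$ (a function of the random variable $\mu$ under $\EE_n$), so that $\EE_n(w\bar w) = q_n^2$, and note the key orthogonality relation coming from the definition of $\lambda_n$: $\EE_n(\bar\mu w) = \EE_n(\bar\mu) - \bar\lambda_n^{-1}\cdot(\text{something})$ — more usefully, $\EE_n(\bar w) = 1 - \lambda_n\EE_n(\bar\mu)$ and one verifies $\EE_n(\mu\bar w) = \EE_n(\mu) - \overline{\lambda_n}\EE_n(|\mu|^2)$, which by \eqref{eq:lambdan} is $\EE_n(\mu) - \overline{\EE_n(\bar\mu)} = \EE_n(\mu) - \EE_n(\mu) = 0$. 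That is the crucial identity: \emph{$\mu$ is $\EE_n$-orthogonal to $w$}. Hence
\begin{equation}
  \EE_{n+1}(\mu) = \frac{\EE_n(\mu\, |w|^2)}{\EE_n(|w|^2)} = \frac{\EE_n(\mu\bar w \cdot w)}{q_n^2}\ ,
\end{equation}
and I would expand $\mu = \EE_n(\mu) + (\mu - \EE_n(\mu))$ and $w = \EE_n(w) + (w-\EE_n(w))$, using $\EE_n(\mu\bar w)=0$ to kill the cross term, reducing everything to covariances/variances of $\mu$ and $w$ under $\EE_n$. The inequality $S_{n+1}\le S_n$ should then follow from Cauchy–Schwarz applied to $\EE_n\big((\mu-\EE_n\mu)\,\overline{(w-\EE_n w)}\big)$ together with the constraint $\EE_n(\mu\bar w)=0$; concretely this last constraint reads $\mathrm{Cov}_n(\mu,w) = -\EE_n(\mu)\overline{\EE_n(w)}$, which pins down the cross term exactly and turns the desired inequality into $|\EE_n(\mu)|^2\,\var_n(w) \le \EE_n(|\mu|^2)\big(q_n^2 - |\EE_n(w)|^2\big)$ or similar, handled by Cauchy–Schwarz once more.

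The main obstacle I anticipate is bookkeeping: keeping the complex conjugates straight and correctly identifying which second moments are free and which are constrained by the Orthomin(1) optimality condition. The conceptual content is just "projecting kills the correlation between $\mu$ and the residual-density $w$, and Cauchy–Schwarz does the rest," but the algebra of turning $S_{n+1}\le S_n$ into a single Cauchy–Schwarz inequality — and checking the degenerate cases (where $\var_n(\mu)=0$, i.e. $r_n$ supported on a single eigenvalue, in which case the iteration has already converged) — is where care is needed. An alternative, possibly cleaner route: show directly that $\lambda_{n+1}\EE_{n+1}(\bar\mu)$ relates to $\lambda_n\EE_n(\bar\mu)$ monotonically, but I expect the covariance formulation above to be the most transparent.
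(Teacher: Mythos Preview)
Your high-level plan --- reduce to the formula $q_n^2 = 1 - |\EE_n(\mu)|^2/\EE_n(|\mu|^2)$, then prove $S_{n+1}\le S_n$ using the Orthomin(1) orthogonality $\EE_n(\mu\bar w)=0$ (with $w=1-\lambda_n\mu$) together with a Cauchy--Schwarz inequality --- is exactly the paper's strategy in spirit. The paper sets $\xi=\lambda_n\mu$, notes that the optimality condition is equivalent to $\EE_n(\xi)=\EE_n(|\xi|^2)$, and proves a standalone inequality (Lemma~\ref{lma:pearson}) via a positive-discriminant argument for the quadratic $t\mapsto \var_n(tw+e^{i\theta}\bar\xi w)$; unwinding that discriminant argument, it is precisely the covariance Cauchy--Schwarz $|\mathrm{Cov}_n(w,\bar\xi w)|^2\le \var_n(w)\,\var_n(\bar\xi w)$.

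However, your concrete implementation has a real gap. The target inequality you write, $|\EE_n(\mu)|^2\var_n(w)\le \EE_n(|\mu|^2)(q_n^2-|\EE_n(w)|^2)$, is vacuous: since $q_n^2=\EE_n(|w|^2)$, the right-hand side is $\EE_n(|\mu|^2)\var_n(w)$, and you have only recovered $|\EE_n(\mu)|^2\le\EE_n(|\mu|^2)$. The decomposition ``expand $\mu$ and $w$ about their means and apply Cauchy--Schwarz to $\mathrm{Cov}_n(\mu,w)$'' does not close: it introduces the uncontrolled second moment $\EE_n(\mu^2)$ and never produces the factor $\EE_n(|\mu|^2|w|^2)$ that appears in $S_{n+1}$. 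The correct pair for Cauchy--Schwarz is $w$ and $\bar\xi w$ (not $\mu$ and $w$): the orthogonality $\EE_n(\bar\xi w)=0$ collapses $\mathrm{Cov}_n(w,\bar\xi w)$ to $\EE_n(\xi|w|^2)$ and $\var_n(\bar\xi w)$ to $\EE_n(|\xi|^2|w|^2)$, and the special relation $w=1-\xi$ with $\EE_n(\xi)=\EE_n(|\xi|^2)$ (hence $\EE_n(\xi)\in\R$) yields the crucial identity $\var_n(w)=\EE_n(|\xi|^2)\,\EE_n(|w|^2)$. With these three simplifications, Cauchy--Schwarz gives $|\EE_n(\xi|w|^2)|^2\le \EE_n(|\xi|^2)\EE_n(|w|^2)\EE_n(|\xi|^2|w|^2)$, which is exactly $S_{n+1}\le S_n$. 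This is the content of the paper's Lemma~\ref{lma:pearson}; your sketch is one step away from it but applies Cauchy--Schwarz to the wrong pair.
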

%%%%%%%%%%%%%%%%%%%%%%%%%%%%%%%%%%%%%%%%%%%%%%%%%%%%%%%%%% 
\begin{proof}
We will use the measure-theoretic notation:
\bes
q_n^2&=&\frac{\nnorm{r_{n+1}}^2}{\nnorm{r_n}^2}
=\EE_n(|1-\lambda_n\mu|^2)
=\EE_n(1+|\lambda_n|^2|\mu|^2-\lambda_n\mu-\bar\lambda_n\bar\mu)
\\
&=&1+|\lambda_n|^2\EE_n(|\mu|^2)-\lambda_n\EE_n(\mu)
-\bar\lambda_n\EE_n(\bar\mu)
\stackrel{\eqref{eq:lambdan}}{=}
1-\frac{|\EE_n(\mu)|^2}{\EE_n(|\mu|^2)}\ .
\ees
We compare  $1-q_{n+1}^2$ and $1-q_n^2$. For the latter, we use 
the change of variable formula~\eqref{eq:cofv}:
\[
1-q_{n+1}^2
=\frac{|\EE_{n+1}(\mu)|^2}{\EE_{n+1}(|\mu|^2)}
=\frac{|\EE_n\left(\mu|1-\lambda_n\mu|^2\right)|^2}
{\EE_n\left(|1-\lambda_n\mu|^2\right) 
\EE_n\left(|\mu|^2|1-\lambda_n\mu|^2\right)}\ .
\]
We can re-write this as 
$$1-q_{n+1}^2
=\frac{|\EE_n(\xi|1-\xi|^2)|^2}
{\EE_n(|1-\xi|^2)\EE_n(|\xi|^2|1-\xi|^2)}\ ,$$ 
with $\xi=\lambda_n\mu$. By construction, 
$$\EE_n(\xi)=\EE_n(|\xi|^2)=\frac{|\EE_n(\mu)|^2}{\EE_n(|\mu|^2)}\ ,$$ 
hence
we can apply the result of Lemma~\ref{lma:pearson} to $\xi$: 
\[
1-q_n^2=\EE_n(|\xi|^2)\geq 
\frac{|\EE_n(\xi|1-\xi|^2)|^2}
{\EE_n(|1-\xi|^2)\EE_n(|\xi|^2|1-\xi|^2)}=1-q_{n+1}^2,
\]
hence $q_n\leq q_{n+1}$.
\end{proof}

Note that we can think of $q_n$ as measuring the dispersion
of the random variable $\mu$ relative to the $r_n$-measure: variance about the mean
divided by average size. The monotonicity of $q_n$ reflects the fact that $\mu$
becomes increasingly more uniformly distributed relative to the $r_n$-measures.

{\color{black}We remark that Proposition~\ref{prop-monotone} holds for all  normal (non-singular) matrices.
Indeed, if $A$ is normal, then we can write  $A=U D U^*$ with $U$ unitary and $D$ diagonal. Consider the
change of variable $\tilde{x} = U^* x $ and $\tilde{b}=U^* b$. Then~\eqref{omin:eq:gensys} is
equivalent to the system $D \tilde{x} = \tilde{b}$, and the residuals are linked via the relation
$$
\tilde{r}_n = \tilde{b}-D\tilde{x}_n = U^* (b - U D U^* x_n) = U^* r_n.
$$
Furthermore, the coefficients $\lambda_n$ satisfy:
$$
\lambda_n = \frac{(r_n, Ar_n)}{(A r_n, A r_n)} = 
\frac{(U \tilde{r}_n, U D U^* {r}_n)}{(U D U^*{r}_n, U D U^*{r}_n)}
=
\frac{( \tilde{r}_n, D  \tilde{r}_n)}{( D \tilde{r}_n, D \tilde{r}_n)} = \tilde{\lambda}_n,
$$
proving that~$\tilde{r}_n$ is also the result of applying Orthomin(1) to the transformed system. 
Thus we have
$$
q_n = \frac{\|r_{n+1}\|}{\|r_{n}\|} = \frac{\|\tilde{r}_{n+1}\|}{\|\tilde{r}_{n}\|},
$$
and it follows from Proposition~\ref{prop-monotone} that $q_n$ is increasing.

We also point out that the result in
Proposition~\ref{prop-monotone} is not new; in fact, Orthomin(1) is identical to GMRES(1), and it was 
shown in~\cite{MR2599773} that 
$$
\frac{\|r_n\|}{\|r_{n-1}\|} \le \frac{\|r_{n+1}\|}{\|r_{n}\|},
$$
where $r_{n}$ is the $n^{\mathrm{th}}$ residual of the restarted GMRES($k$). Hence, Proposition~\ref{prop-monotone}
is a particular case of Theorem~5 in~\cite{MR2599773}. However, we believe our proof offers an alternative
argument leading to the particular result of interest to the behavior of Orthomin(1).}

%%%%%%%%%%%%%%%%%%%%%%%%%%%%%%%%%%%%%%%%%%%%%%%%%%%%%%%%%% 
\subsection{The case $\mu_k=1+\rho\zeta_k$, $|\zeta_k|=1$, and $r_0\in\C^d$ arbitrary}
\label{ssec:abszetaisone}
In this section we assume that $A$ is of the form
$$A=I+\rho U,\ \  \ U=\operatorname{diag}[\zeta_1,\dots, \zeta_d],$$ 
with $0<\rho<1$ and
$|\zeta_1|=\dots=|\zeta_d|=1$. Also, we keep
$r_0\in\C^d$ arbitrary unless otherwise specified.
%%%%%%%%%%%%%%%%%%%%%%%%%%%%%%%%%%%%%%%%%%%%%%%%%%%%%%%%%% 
We introduce the following quantities, for $n\geq 0$:
\begin{equation}
\omega_n=\frac{(U r_n, r_n)}{(r_n, r_n)},\quad
\tau_n=\frac{1-\lambda_n}{\rho\lambda_n}\ .
\end{equation}
Note that the coefficients of $r_{n+1}$ are related to those of $r_n$ as follows
\begin{equation}
r_{n+1}^k=(1-\lambda_n\mu_k)r_n^k
=\rho\lambda_n(\tau_n-\zeta_k)r_n^k,
\end{equation}
and the change of variable formula becomes
\begin{equation}
\label{eq:cofv2}
\EE_{n+1}(\xi)=
\frac{\EE_n(\xi|\tau_n-\zeta|^2)}{\EE_n(|\tau_n-\zeta|^2)}\  .
\end{equation}
%%%%%%%%%%%%%%%%%%%%%%%%%%%%%%%%%%%%%%%%%%%%%%%%%%%%%%%%%%  
\begin{lemma} For $n\geq 0$ we have
\begin{equation}
\lambda_n=\frac{1+\rho\bar\omega_n}{1+\rho^2+2\rho\Re\omega_n},\quad
\tau_n=\frac{\omega_n+\rho}{\rho\bar\omega_n+1},\quad
q_n^2=\rho^2\frac{1-|\omega_n|^2}{1+\rho^2+2\rho\Re\omega_n}\ ,
\end{equation}
where $\Re z$ denotes the real  part of a complex number $z$.
\end{lemma}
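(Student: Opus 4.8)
The plan is to obtain all three identities by substituting $\mu_k = 1+\rho\zeta_k$ into the formulas already in hand: the expression $\lambda_n = \EE_n(\bar\mu)/\EE_n(|\mu|^2)$ from~\eqref{eq:lambdan}, the definition $T_n = (1-\lambda_n)/(\rho\lambda_n)$, and the identity $q_n^2 = 1 - |\EE_n(\mu)|^2/\EE_n(|\mu|^2)$ derived in the proof of Proposition~\ref{prop-monotone}. The one structural fact used throughout is that, by the definition of $\omega_n$ and of the $r_n$-measure, $\EE_n(\zeta) = \omega_n$ (hence $\EE_n(\bar\zeta) = \bar\omega_n$), which combined with $|\zeta_k|=1$ gives the three expectations $\EE_n(\mu) = 1+\rho\omega_n$, $\EE_n(\bar\mu) = 1+\rho\bar\omega_n$, and, using $|1+\rho\zeta_k|^2 = 1+\rho^2+2\rho\Re\zeta_k$ together with linearity of $\EE_n$, $\EE_n(|\mu|^2) = 1+\rho^2+2\rho\Re\omega_n$.

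The formula for $\lambda_n$ is then immediate from~\eqref{eq:lambdan}. For $T_n$ I would write $1-\lambda_n$ over the common denominator $1+\rho^2+2\rho\Re\omega_n$; the numerator equals $\rho^2 + 2\rho\Re\omega_n - \rho\bar\omega_n$, and here the key cancellation is $2\Re\omega_n = \omega_n+\bar\omega_n$, so that $2\rho\Re\omega_n - \rho\bar\omega_n = \rho\omega_n$ and thus $1-\lambda_n = \rho(\rho+\omega_n)/(1+\rho^2+2\rho\Re\omega_n)$. Dividing by $\rho\lambda_n = \rho(1+\rho\bar\omega_n)/(1+\rho^2+2\rho\Re\omega_n)$ cancels the common denominator and one power of $\rho$, leaving $T_n = (\omega_n+\rho)/(\rho\bar\omega_n+1)$.

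For $q_n^2$ I would substitute $|\EE_n(\mu)|^2 = |1+\rho\omega_n|^2 = 1+\rho^2|\omega_n|^2+2\rho\Re\omega_n$ and the value of $\EE_n(|\mu|^2)$ above into $q_n^2 = 1 - |\EE_n(\mu)|^2/\EE_n(|\mu|^2)$; the numerator of the resulting single fraction collapses, since $1+\rho^2+2\rho\Re\omega_n$ and $1+\rho^2|\omega_n|^2+2\rho\Re\omega_n$ differ exactly by $\rho^2(1-|\omega_n|^2)$, yielding $q_n^2 = \rho^2(1-|\omega_n|^2)/(1+\rho^2+2\rho\Re\omega_n)$. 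Every step is an algebraic identity, so there is no genuine obstacle here; the only point that requires a moment's attention is spotting the cancellation $2\rho\Re\omega_n-\rho\bar\omega_n=\rho\omega_n$, which is precisely what makes both $T_n$ and $q_n^2$ collapse to such clean forms. As a consistency check, $|\omega_n|\le 1$ (an average of unit-modulus numbers), so the new expression for $q_n^2$ is manifestly nonnegative and strictly less than $1$ whenever the $\zeta_k$ in the support of the $r_n$-measure are not all equal, in agreement with Proposition~\ref{prop-monotone}.
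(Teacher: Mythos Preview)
Your proposal is correct and follows essentially the same route as the paper: compute $\EE_n(\bar\mu)$, $\EE_n(|\mu|^2)$, and $|\EE_n(\mu)|^2$ from $\mu=1+\rho\zeta$ and $\omega_n=\EE_n(\zeta)$, then plug into~\eqref{eq:lambdan}, the definition of $T_n$, and the formula $q_n^2=1-|\EE_n(\mu)|^2/\EE_n(|\mu|^2)$. The only difference is that you spell out the intermediate algebra for $T_n$ (the cancellation $2\rho\Re\omega_n-\rho\bar\omega_n=\rho\omega_n$), whereas the paper simply declares $T_n$ a direct consequence of the formula for $\lambda_n$.
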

%%%%%%%%%%%%%%%%%%%%%%%%%%%%%%%%%%%%%%%%%%%%%%%%%%%%%%%%%% 
\begin{proof}
%\indent{\em Proof}.
Let $\zeta=(\zeta_1,\dots, \zeta_d)$. 
Clearly, $\omega_n=\EE_n(\zeta)$.
Since $\mu=1+\rho\zeta$, we have
\[\lambda_n
=\frac{\EE_n(1+\rho\bar\zeta)}{\EE_n(|1+\rho\zeta|^2)}\ .\]
The formula for $\lambda_n$ then follows from
the fact that $\EE_n(1+\rho\bar\zeta)=1+\rho\bar\omega_n$,
and $\EE_n(|1+\rho\zeta|^2)=1+\rho^2+2\rho\Re\omega_n$. 
Next, the formula of $\tau_n$ is a direct consequence of the formula of $\lambda_n$.
Finally, 
\begin{eqnarray*}
q_n^2&=&1-\frac{|\EE_n(\mu)|^2}{\EE_n(|\mu|^2)} = 
1-\frac{1+\rho^2|\omega_n|^2+2\rho\Re\omega_n}{1+\rho^2+2\rho\Re\omega_n}
=\frac{\rho^2(1-|\omega_n|^2)}{1+\rho^2+2\rho\Re\omega_n}\ .
%\qquad\endproof
\end{eqnarray*}
\end{proof}
%%%%%%%%%%%%%%%%%%%%%%%%%%%%%%%%%%%%%%%%%%%%%%%%%%%%%%%%%% 
\begin{proposition}
\label{th:convequiv}
For $n\geq 0$ we have $|\omega_n|\leq 1$ and $ 0\leq q_n\leq\rho$.
Moreover, the following statements are equivalent:
\begin{equation}
\label{eq:convequiv}
{\bf (a)}\lim_{n\to\infty}q_n=\rho\:;
\quad
{\bf (b)} \lim_{n\to\infty} \omega_n=-\rho\:;
\quad
{\bf (c)} \lim_{n\to\infty}\lambda_n=1\:;
\quad
{\bf (d)} \lim_{n\to\infty}\tau_n= 0\:.
\end{equation}
\end{proposition}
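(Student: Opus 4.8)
The plan is to establish the three chains of implications and the two bounds using the algebraic identities from the preceding lemma, which express everything in terms of $\omega_n$. First I would prove the bounds. Since $\omega_n = \EE_n(\zeta)$ is a convex combination of the unit-modulus numbers $\zeta_1,\dots,\zeta_d$ weighted by the $r_n$-measure, we get $|\omega_n|\le 1$ immediately from the triangle inequality (equivalently, Jensen). For the bound on $q_n$, recall from the lemma that
\[
q_n^2 = \rho^2\,\frac{1-|\omega_n|^2}{1+\rho^2+2\rho\,\Re\omega_n}\ .
\]
The numerator is nonnegative by $|\omega_n|\le 1$. The denominator equals $\EE_n(|1+\rho\zeta|^2)\ge (1-\rho)^2>0$, so $q_n$ is well-defined and nonnegative; and since $1-|\omega_n|^2\le 1-(\Re\omega_n)^2 = (1-\Re\omega_n)(1+\Re\omega_n)$ while $1+\rho^2+2\rho\Re\omega_n \ge 1 + 2\rho\Re\omega_n + \rho^2(\Re\omega_n)^2 \cdot(\cdots)$ — more cleanly, one checks directly that $\rho^2(1-|\omega_n|^2)\le 1+\rho^2+2\rho\Re\omega_n$ is equivalent to $0\le (1+\rho\Re\omega_n)^2 + \rho^2(\Im\omega_n)^2 - \rho^2 + \rho^2(\Re\omega_n)^2\cdot 0$, i.e. to $0 \le (1 + \rho\,\overline{\omega_n})(1+\rho\,\omega_n) \cdot$ something nonnegative; in any case the inequality $q_n^2\le\rho^2$ reduces to $|1+\rho\omega_n|^2\ge 0$ after clearing denominators, so $q_n\le\rho$.

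Next I would prove the equivalences, the cleanest route being a cycle. For $(b)\Rightarrow(a)$: substitute $\omega_n\to-\rho$ into the displayed formula for $q_n^2$; the numerator tends to $\rho^2(1-\rho^2)$ and the denominator to $1+\rho^2-2\rho^2 = 1-\rho^2$, giving $q_n^2\to\rho^2$. For $(a)\Rightarrow(b)$: from $q_n^2\to\rho^2$ and the formula, $1-|\omega_n|^2 \to (1-|\omega_n|^2)$ forces (after rearranging) $\rho^2(1-|\omega_n|^2) - \rho^2(1+\rho^2+2\rho\Re\omega_n)/\rho^2\cdot\rho^2 \to 0$; concretely $q_n^2 = \rho^2 - \rho^2\,|1+\rho\omega_n|^2 / (1+\rho^2+2\rho\Re\omega_n)$ (this is the identity underlying the bound above, worth isolating once and for all), so $q_n^2\to\rho^2$ is equivalent to $|1+\rho\omega_n|^2\to 0$ since the denominator stays bounded away from $0$ and $\infty$; and $|1+\rho\omega_n|\to 0$ with $\rho>0$ is exactly $\omega_n\to-\rho$. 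The equivalences $(b)\Leftrightarrow(c)$ and $(b)\Leftrightarrow(d)$ then follow from the lemma's formulas $\lambda_n = (1+\rho\bar\omega_n)/(1+\rho^2+2\rho\Re\omega_n)$ and $T_n = (\omega_n+\rho)/(\rho\bar\omega_n+1)$: these are continuous functions of $\omega_n$ on the closed disk $|\omega_n|\le1$ (the denominators never vanish there, since $\rho<1$), mapping $\omega_n=-\rho$ to $\lambda_n=1$ and $T_n=0$ respectively, and conversely $\lambda_n=1$ or $T_n=0$ pins down $\omega_n=-\rho$ by solving the linear relation. Compactness of the disk lets one pass between convergence of $\omega_n$ and convergence of its continuous images via subsequences.

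I do not anticipate a serious obstacle here: the statement is essentially a repackaging of the lemma's identities plus the observation that all the relevant maps are continuous and proper on the disk $\{|\omega|\le 1\}$ with no pole issues because $\rho<1$. The one point requiring mild care is the direction $(a),(c),(d)\Rightarrow(b)$, where one must argue that a bounded sequence $\omega_n$ whose continuous image converges must itself converge — this needs the map in question to be injective on the disk (true for the Möbius map giving $T_n$, and true for the affine-over-affine map giving $\lambda_n$ since it is also injective as a Möbius-type transformation of $\omega_n$), so that the preimage of the limit is a single point and every convergent subsequence of $\omega_n$ has that same limit. Isolating the identity $q_n^2 = \rho^2 - \rho^2|1+\rho\omega_n|^2/\EE_n(|1+\rho\zeta|^2)$ at the outset makes both the bound $q_n\le\rho$ and the equivalence $(a)\Leftrightarrow(b)$ fall out transparently.
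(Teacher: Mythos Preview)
Your overall strategy matches the paper's: express $q_n$, $\lambda_n$, $T_n$ as continuous functions of $\omega_n$ on the closed unit disk via the lemma and read off the equivalences. However, you have a recurring algebraic slip. The identity you want is
\[
\rho^2 - q_n^2 \;=\; \frac{\rho^2\,|\omega_n+\rho|^2}{1+\rho^2+2\rho\,\Re\omega_n}\,,
\]
not with $|1+\rho\omega_n|^2$ in the numerator: clearing denominators in $q_n^2\le\rho^2$ gives $0\le |\omega_n|^2+2\rho\,\Re\omega_n+\rho^2=|\omega_n+\rho|^2$. As written, your claim ``$|1+\rho\omega_n|\to 0$ is exactly $\omega_n\to-\rho$'' is false (it would force $\omega_n\to -1/\rho$, which lies outside the unit disk), so your (a)$\Rightarrow$(b) step fails as stated. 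With $|\omega_n+\rho|$ in place of $|1+\rho\omega_n|$ everything goes through.

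Once corrected, your argument is essentially the paper's. The paper phrases (a)$\Rightarrow$(b) as ``$\frac{1-|\omega|^2}{1+\rho^2+2\rho\Re\omega}\le 1$ with equality only at $\omega=-\rho$,'' which is the same computation. For (c),(d)$\Rightarrow$(b) the paper is more direct than your M\"obius/compactness route: it just notes that $1-\lambda_n=\rho(\omega_n+\rho)/(1+\rho^2+2\rho\Re\omega_n)$ and $T_n=(\omega_n+\rho)/(1+\rho\bar\omega_n)$ have denominators bounded away from $0$ and $\infty$, so their vanishing forces $\omega_n+\rho\to 0$. The paper also obtains $q_n\le\rho$ by citing Theorem~\ref{th:convnormal} rather than from the formula; your direct derivation (once the expression is fixed) is a legitimate alternative.
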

\begin{proof}
The bound $|\omega_n|\leq 1$ follows from  $\|U\|\leq 1$.
The fact that $q_n$ is increasing has been proved in the previous section, and
the bound $q_n\leq\rho$ is a direct consequence of Theorem \ref{th:convnormal}.
Since  $\lambda_n$, $\tau_n$,  $q_n$ are continuous functions of $\omega_n$,
the statement (b) clearly implies all the others.
We also have (a) $\Rightarrow$ (b)
since 
$$\frac{1-|\omega|^2}{1+\rho^2+2\rho\Re\omega}\leq 1,$$ with equality 
for $\omega=-\rho$.
Similarly (d) $\Rightarrow$ (b) since $\tau_n$ has bounded denominator.
Finally,
$$1-\lambda_n=\frac{\rho(\rho+\omega_n)}{1+\rho^2+2 \rho \Re \omega_n}\ .$$
Since the denominator is bounded, 
{\color{black}
\begin{align*}
\lim_{n\to \infty} \lambda_n = 1 \ \  \mathrm{implies}\ \  \lim_{n\to \infty}\omega_n = -\rho, 
\end{align*}
}
showing that
(c)$\Rightarrow$ (b).
\end{proof}
{\color{black}
%In addition to the quantities defined above, we also define the higher moments 
In addition to the quantities above, we define the ratios
%\omega_n,j=(U^jr_n,r_n)/(r_n,r_n)
\begin{equation}
\label{eq:defomegank}
\omega_{n,j}:=
\frac{(U^j r_n, r_n)}{(r_n, r_n)}
=\EE_n(\zeta^j),\quad
 j\geq 0\ ,
\end{equation}
which will play an important role in the convergence argument  (Section~\ref{ssec:convfindim}). 
These satisfy a recurrence relation:
\begin{proposition}
\label{th:omegarec}
For $n\geq 0$ we have the following recurrence relation
\begin{equation}
\label{eq:omegarec}
\omega_{n+1,j}
=\frac{(1+|\tau_n|^2)\omega_{n,j}-\tau_n\omega_{n,j-1}-\bar \tau_n\omega_{n,j+1}}
{1+|\tau_n|^2-2\Re(\bar \tau_n\omega_n)} ,\quad j\geq 1\ .
\end{equation}
\end{proposition}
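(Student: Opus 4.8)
The plan is to read off the recurrence directly from the change-of-variable formula~\eqref{eq:cofv2} applied to the test function $\xi = \zeta^j$, exploiting throughout that every $\zeta_k$ lies on the unit circle.

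First I would expand the weight appearing in~\eqref{eq:cofv2}: since $|\zeta_k| = 1$ for all $k$, pointwise $|T_n - \zeta|^2 = |T_n|^2 + 1 - T_n\bar\zeta - \bar T_n\zeta$, the cross term $-\zeta\bar\zeta$ contributing the constant $-1$ rather than a $|\zeta|^2$ factor. Substituting $\xi = \zeta^j$ into~\eqref{eq:cofv2} and using linearity of $\EE_n$ gives
\[
\omega_{n+1,j} = \frac{(1+|T_n|^2)\,\EE_n(\zeta^j) - T_n\,\EE_n(\zeta^j\bar\zeta) - \bar T_n\,\EE_n(\zeta^{j+1})}{(1+|T_n|^2) - T_n\,\EE_n(\bar\zeta) - \bar T_n\,\EE_n(\zeta)}.
\]
Here I would again invoke $|\zeta| = 1$ in the form $\zeta^j\bar\zeta = \zeta^{j-1}$, so that $\EE_n(\zeta^j\bar\zeta) = \omega_{n,j-1}$ in the notation~\eqref{eq:defomegank}; the hypothesis $j\geq 1$ ensures $j-1\geq 0$, so this index is admissible. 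The other numerator terms are $\omega_{n,j}$ and $\omega_{n,j+1}$, while in the denominator $\EE_n(\zeta) = \omega_n$ and $\EE_n(\bar\zeta) = \bar\omega_n$, whence it equals $1 + |T_n|^2 - T_n\bar\omega_n - \bar T_n\omega_n = 1 + |T_n|^2 - 2\Re(\bar T_n\omega_n)$. This is exactly~\eqref{eq:omegarec}.

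There is no substantive obstacle; the two points worth attention are the systematic use of $|\zeta_k| = 1$ (both for the constant term of $|T_n - \zeta|^2$ and for collapsing $\zeta^j\bar\zeta$ to $\zeta^{j-1}$), and the fact that the recurrence is well posed, i.e.\ the common denominator $\EE_n(|T_n - \zeta|^2)$ does not vanish. The latter holds whenever the iteration has not already terminated: from $r_{n+1}^k = \rho\lambda_n(T_n - \zeta_k)r_n^k$ one obtains $\EE_n(|T_n - \zeta|^2) = \nnorm{r_{n+1}}^2/(\rho^2|\lambda_n|^2\,\nnorm{r_n}^2) > 0$ as long as $r_{n+1}\neq 0$ (and $\lambda_n\neq 0$, which is immediate from the formula for $\lambda_n$ in the preceding Lemma).
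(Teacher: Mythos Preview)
Your argument is correct and is essentially the paper's own proof: the paper also applies the change-of-variable formula~\eqref{eq:cofv2} with $\xi=\zeta^j$, expands $|T_n-\zeta|^2=1+|T_n|^2-T_n\bar\zeta-\bar T_n\zeta$ using $|\zeta|=1$, and reads off the moments. Your additional remark on nonvanishing of the denominator is a harmless extra that the paper omits.
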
 
\begin{proof}
It helps to think of $\omega_{n,j}$ as moments of the $r_n$-distribution, since by definition
$$\omega_n,j=\mathbb E_n[\zeta^j].$$
%Using the change of variable formula~\eqref{}(29) we get:
%These moments are used in the convergence proof in Section~\ref{ssec:convfindim}.
Clearly, $\omega_{n,0}=1\ \ \mathrm{and}\ \ \omega_{n,1}=\omega_n.$
Using the change of variable formula (\ref{eq:cofv2}), we get:
\[
\omega_{n+1,j}=\EE_{n+1}(\zeta^j)
=\frac{\EE_n(\zeta^j|\tau_n-\zeta|^2)}{\EE_n(|\tau_n-\zeta|^2)}
=\frac{\EE_n\{\zeta^j(1+|\tau_n|^2-\tau_n\bar\zeta-\bar \tau_n\zeta)\}}
{\EE_n\{1+|\tau_n|^2-\tau_n\bar\zeta-\bar \tau_n\zeta\}}\ ,
\]
and the result follows.% we obtain the following result.
\end{proof}
}
%
%\begin{corollary}
%The collection of moments of the initial distribution  
%$\{\omega_{0,j}\}_{ j\geq 0}$ completely
%determine the sequence $\omega_n$ and implicitly~$q_n$.
%\end{corollary}
%Note that when $\zeta_k$ are the roots of unity of order $d$
%we have $\omega_{n,j+d}=\omega_{n,j}$; hence 
%the finite set of initial moments $\{\omega_{0,j}\}_{1\leq j\leq d}$ completely
%determine the sequence~$q_n$. 

%%%%%%%%%%%%%%%%%%%%%%%%%%%%%%%%%%%%%%%%%%%%%%%%%%%%%%%%%% 
%%%%%%%%%%%%%%%%%%%%%%%%%%%%%%%%%%%%%%%%%%%%%%%%%%%%%%%%%% 
%%%%%%%%%%%%%%%%%%%%%%%%%%%%%%%%%%%%%%%%%%%%%%%%%%%%%%%%%% 
\subsection{Non-convergence to $\rho$}
\label{ssec:nonconvergence}
Let $\Hull(\zeta_1,\dots, \zeta_d)$ denote the convex hull of $\zeta_1,\dots, \zeta_d$. 
This is a compact convex subset of $\C$. 
Since
$$\omega_n=\frac{\sum_{k=1}^d\zeta_k|{\color{black}r_n^k}|^2}{\sum_{k=1}^d|{\color{black}r_n^k}|^2}\in\Hull(\zeta_1,\dots, \zeta_d)\ ,$$
the sequence $\omega_n$ cannot converge to $-\rho$ unless
$-\rho\in \Hull(\zeta_1,\dots, \zeta_d)$.
Since the statements {\color{black}$\lim_{n\to \infty}\omega_n=-\rho$ and $\lim_{n\to \infty}q_n=\rho$} are
equivalent, we have the following.
\begin{proposition}
\label{prop:nonconv}
Assume $-\rho\notin\Hull(\zeta_1,\dots, \zeta_d)$. Then
$\lim_{{\color{black}n\to\infty}}q_n\neq \rho$.
\end{proposition}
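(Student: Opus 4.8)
The plan is to combine two facts already established in the excerpt: (i) the equivalence $\lim_n q_n = \rho \iff \lim_n \omega_n = -\rho$ from Proposition~\ref{th:convequiv}, and (ii) the geometric observation that $\omega_n$ is, by its very definition as a weighted average of the $\zeta_k$, always confined to the compact convex set $\Hull(\zeta_1,\dots,\zeta_d)$. The argument is then a one-line contrapositive: if $-\rho \notin \Hull(\zeta_1,\dots,\zeta_d)$, then since $\Hull(\zeta_1,\dots,\zeta_d)$ is closed, its complement is open, so no sequence taking values in $\Hull(\zeta_1,\dots,\zeta_d)$ can converge to $-\rho$; hence $\lim_n \omega_n \ne -\rho$ (in fact the limit, if it exists, stays in the hull), and therefore $\lim_n q_n \ne \rho$.

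First I would record that $\omega_n = \EE_n(\zeta) = \sum_{k=1}^d \zeta_k |r_n^k|^2 / \sum_{k=1}^d |r_n^k|^2$ is a convex combination of $\zeta_1,\dots,\zeta_d$ with nonnegative weights summing to one, hence $\omega_n \in \Hull(\zeta_1,\dots,\zeta_d)$ for every $n$. Next I would invoke that $\Hull(\zeta_1,\dots,\zeta_d)$ is compact (a closed bounded subset of $\C$), so that its complement in $\C$ is open; consequently, if $-\rho$ lies in this complement there is an open neighborhood of $-\rho$ disjoint from the hull, and $\omega_n$ never enters that neighborhood. This immediately forbids $\omega_n \to -\rho$. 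Finally I would quote the equivalence (a)$\iff$(b) of Proposition~\ref{th:convequiv} to conclude $\lim_n q_n \ne \rho$.

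There is essentially no obstacle here — the statement is an immediate corollary of the structural results already in place, and the only thing to be careful about is the logical direction: we are not claiming $q_n$ fails to converge (indeed Proposition~\ref{prop-monotone} guarantees it does converge, being increasing and bounded), only that its limit cannot equal $\rho$. The remark worth adding is that one can say slightly more with no extra work: since $q_n$ is increasing and bounded by $\rho$ (Proposition~\ref{th:convequiv}), the limit $\lim_n q_n$ always exists and lies in $[0,\rho]$, and the hull condition simply rules out the endpoint value $\rho$.
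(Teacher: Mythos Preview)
Your proposal is correct and follows exactly the same approach as the paper: observe that $\omega_n$ is a convex combination of the $\zeta_k$ and hence lies in the closed set $\Hull(\zeta_1,\dots,\zeta_d)$, so it cannot converge to $-\rho$ if $-\rho$ is outside the hull, and then invoke the equivalence (a)$\iff$(b) of Proposition~\ref{th:convequiv}. Your added remark about the limit of $q_n$ existing in $[0,\rho]$ is a nice clarification but not in the paper's version.
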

%%%%%%%%%%%%%%%%%%%%%%%%%%%%%%%%%%%%%%%%%%%%%%%%%%%%%%%%%% 
\begin{corollary}
\label{cor:arccos}
Assume that $\rho\in (0,1)$ is arbitrary, and
$|\theta_k|<\pi-\arccos(\rho)$, for $1\leq k\leq d$. 
If
 $\zeta_k=\exp(i\theta_k)$,  then
$\lim_{{\color{black}n\to \infty}} q_n\neq\rho$. 
\end{corollary}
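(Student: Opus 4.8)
The plan is to deduce the Corollary directly from the preceding Proposition by verifying that the angular hypothesis forces $-\rho$ to lie outside $\Hull(\zeta_1,\dots,\zeta_d)$. Once this is established, that Proposition gives $\lim_n q_n\neq\rho$ at once (the limit exists because $q_n$ is increasing and bounded, by Proposition~\ref{prop-monotone}).

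The key step is a trigonometric reformulation of the hypothesis. First I would observe that $\pi-\arccos\rho=\arccos(-\rho)$, a number in $(0,\pi)$. Since $\cos$ is strictly decreasing on $[0,\pi]$ and $\cos\theta_k=\cos|\theta_k|$, the assumption $|\theta_k|<\pi-\arccos\rho$ is equivalent to $\cos\theta_k>-\rho$, that is, $\Re\zeta_k>-\rho$ for every $k\in\{1,\dots,d\}$. Thus all of the points $\zeta_1,\dots,\zeta_d$ lie strictly to the right of the vertical line $\{\Re z=-\rho\}$.

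Because there are only finitely many indices, I can set $c:=\min_{1\le k\le d}\Re\zeta_k$, so that $c>-\rho$. Every element of $\Hull(\zeta_1,\dots,\zeta_d)$ is a convex combination of the $\zeta_k$ and hence has real part at least $c>-\rho=\Re(-\rho)$; therefore $-\rho\notin\Hull(\zeta_1,\dots,\zeta_d)$. Equivalently, $\Re\omega_n\ge c$ for all $n$, so $\omega_n$ stays bounded away from $-\rho$ and cannot converge to it; invoking the equivalence of statements $(a)$ and $(b)$ in Proposition~\ref{th:convequiv} then yields $\lim_n q_n\neq\rho$, as claimed.

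There is no genuine obstacle here: the argument is a short deduction from the preceding Proposition. The only point requiring a little care is the passage from the strict pointwise inequalities $\Re\zeta_k>-\rho$ to a uniform gap, which is exactly why I would use finiteness of the index set to replace them by the single strict inequality $c>-\rho$ — precisely what is needed to keep $-\rho$ out of the closed convex hull.
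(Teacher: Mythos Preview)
Your proof is correct and follows essentially the same approach as the paper: both verify that the angular hypothesis forces $\Re\zeta_k>-\rho$ for every $k$, whence $-\rho\notin\Hull(\zeta_1,\dots,\zeta_d)$, and then invoke the preceding Proposition. Your version simply spells out the trigonometric reformulation and the finiteness argument more carefully than the paper's two-sentence proof.
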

\begin{proof}
The angles are chosen so that $\Re(\zeta_k)> -\rho$.
This ensures $-\rho\notin\Hull(\zeta_1,\dots, \zeta_d)$, 
and  the previous Proposition applies.
\end{proof}

Figures~\ref{fig:nonconv1} and~\ref{fig:nonconv2}
illustrate the context of Corollary~\ref{cor:arccos}:~$q_n$ does not converge to $\rho$,
and  $\omega_n$ does not converge to $-\rho$.
We end this section with a sharpened version of Conjecture~\ref{conj:o1tokexamples} for $k=1$:
%%%%%%%%%%%%%%%%%%%%%%%%%%%%%%%%%%%%%%%%%%%%%%%%%%%%%%%%%% 
\begin{conjecture}
\label{conj:hull} For Orthomin$(1)$, 
if $-\rho\in\Hull(\zeta_1,\dots, \zeta_d)$, then {\color{black}$$\lim_{n\to \infty}q_n=\rho.$$}
\end{conjecture}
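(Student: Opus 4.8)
The plan is to show that the monotone bounded sequence $\{q_n\}$ — equivalently the sequence $\{\omega_n\}$ living in the compact convex set $K=\Hull(\zeta_1,\dots,\zeta_d)$ — cannot stabilize at any point other than $-\rho$ once $-\rho\in K$. Since $q_n$ is increasing (Proposition~\ref{prop-monotone}) it has a limit $q_\infty\le\rho$, and by Proposition~\ref{th:convequiv} it suffices to rule out $q_\infty<\rho$. Suppose for contradiction that $q_\infty<\rho$; then along a subsequence $\omega_{n}\to\omega_\infty$ with $\omega_\infty\ne-\rho$ and $|\omega_\infty|<1$, and correspondingly $T_n\to T_\infty\ne 0$ via the continuous formula $T_n=(\omega_n+\rho)/(\rho\bar\omega_n+1)$. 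The first key step is to pass this convergence to the full vector of moments: the moments $\omega_{n,j}=\EE_n(\zeta^j)$ all lie in the closed unit disk, so by a diagonal/compactness argument we may assume $\omega_{n,j}\to\omega_{\infty,j}$ for every $j$ (in the root-of-unity case only $j=1,\dots,d$ matter), and the recurrence~\eqref{eq:omegarec} passes to the limit to give a \emph{fixed point} relation
\[
\omega_{\infty,j}\bigl(1+|T_\infty|^2-2\Re(\bar T_\infty\omega_\infty)\bigr)
=(1+|T_\infty|^2)\omega_{\infty,j}-T_\infty\omega_{\infty,j-1}-\bar T_\infty\omega_{\infty,j+1},
\quad j\ge 1.
\]

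The second key step is to extract a contradiction from this fixed-point system. Rearranging, the limiting moments satisfy the linear recursion $\bar T_\infty\,\omega_{\infty,j+1}-2\Re(\bar T_\infty\omega_\infty)\,\omega_{\infty,j}+T_\infty\,\omega_{\infty,j-1}=0$; dividing by $\bar T_\infty$ this is a constant-coefficient three-term recursion whose characteristic roots are $\zeta$ and $1/\bar\zeta$ for a suitable $\zeta$ on the unit circle (because the recursion is the moment identity forced by a measure invariant under the map $\xi\mapsto|T_\infty-\xi|^2$-reweighting having $\EE(\zeta)=\EE(|\zeta|^2)\cdot$(const)). Concretely, a limiting measure $\mu_\infty$ on $\{\zeta_1,\dots,\zeta_d\}$ exists (weak limit of the $r_n$-measures along the subsequence) whose reweighting by $|T_\infty-\zeta|^2$ leaves all its moments unchanged, i.e. $\mu_\infty$ is a fixed point of the reweighting map. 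One then argues that such a fixed measure must be supported where $|T_\infty-\zeta|$ is \emph{constant} — that is, on the intersection of $\{\zeta_1,\dots,\zeta_d\}$ with a circle centered at $T_\infty$ — since otherwise the reweighting strictly moves mass and hence moves the mean, contradicting fixedness (here the variance/Pearson inequality of Lemma~\ref{lma:pearson} should give the needed strictness, with equality forcing degeneracy of the support).

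The third step closes the argument using the hypothesis $-\rho\in K$. On the support of $\mu_\infty$ we have $|T_\infty-\zeta|\equiv c$, so the $\zeta_k$ carrying mass all lie on a circle of radius $c$ about $T_\infty$; combined with $|\zeta_k|=1$ they lie on the intersection of two circles, hence on at most two points, and a short computation with $\EE(\zeta)=\EE(|\zeta|^2)$-type identities pins down $\omega_\infty$, $T_\infty$. One then checks that the only way the monotonicity of $q_n$ is compatible with the dynamics near such a configuration — i.e. the only stable fixed configuration reachable from an initial measure charging a point set whose hull contains $-\rho$ — is $T_\infty=0$, $\omega_\infty=-\rho$, contradicting $q_\infty<\rho$. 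Intuitively: if $T_\infty\ne 0$, the reweighting by $|T_\infty-\zeta|^2$ pushes mass toward the $\zeta_k$ farthest from $T_\infty$, and since $-\rho$ is an interior-ish point of the hull there is always a direction in which mass genuinely moves, so $\omega_n$ cannot have converged — $q_n$ would still be strictly increasing past $q_\infty$.

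The main obstacle is the third step: turning the static fixed-point description of $\mu_\infty$ into the conclusion that the only admissible limit is $-\rho$. The fixed-point equation alone permits spurious two-point configurations on a circle about some $T_\infty\ne0$, and these must be excluded by a genuinely \emph{dynamical} argument showing that the Orthomin(1) map cannot converge to them from an initial measure whose support-hull contains $-\rho$ — equivalently, a local stability/instability analysis of the reweighting dynamics near such configurations, or a clever monotone quantity (beyond $q_n$ itself) that is strictly advanced unless $T_n\to0$. Making that rigorous, rather than the routine compactness and recursion-passing of steps one and two, is where the real work lies; this is presumably why the authors leave it as a conjecture and prove instead the special structured case (roots of unity) in Section~\ref{ssec:convfindim} where additional symmetry is available.
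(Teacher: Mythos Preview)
The paper does not prove Conjecture~\ref{conj:hull}; it is left open. What the paper establishes instead is the much narrower Theorem~\ref{th:ortho1proof} (the $\zeta_k$ are the $d$th roots of unity, $d\ge4$, $r_0=(1,\dots,1)^T$, $0<\rho<0.1$), via explicit recursive inequalities on $|\beta_n|=|1-\lambda_n|$, $|u_n|$, $|v_n|$ (Propositions~\ref{andrei_inequality} and~\ref{prop:ortho1technical}) that exploit the vanishing initial moments $\omega_{0,1}=\omega_{0,2}=\omega_{0,3}=0$. So there is no paper-proof of the full conjecture to compare your proposal against; your closing paragraph already recognizes this.

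Your outline has a concrete gap \emph{before} the step you flag as the hard one. In the ``first key step'' you pass to a subsequence $n_k$ along which the moment vector converges and then feed~\eqref{eq:omegarec} into the limit to obtain a \emph{fixed-point} relation. But \eqref{eq:omegarec} relates step $n_k$ to step $n_k{+}1$, not to step $n_{k+1}$, and subsequential convergence of $\{\omega_{n_k,j}\}_j$ does not force $\{\omega_{n_k+1,j}\}_j$ to converge to the \emph{same} limit. Section~\ref{ssec:sis2} exhibits exactly this failure: for $d=2$ the sequence $\omega_n$ is genuinely periodic of period~$2$, so subsequential limits are period-$2$ points of the reweighting map, not fixed points, and your fixed-point equation is never attained. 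Worse, with $\zeta_1=1$, $\zeta_2=-1$ one has $-\rho\in\Hull(\zeta_1,\zeta_2)=[-1,1]$ while $q_n\equiv q_0=\rho/\sqrt{1+\rho^2}<\rho$ for $r_0=(1,1)^T$; hence the conjecture as literally stated already fails at $d=2$. Any correct argument must therefore add a hypothesis (at minimum $d\ge3$, or that the dynamics cannot collapse the support to two points) and must exclude these two-point periodic orbits --- precisely your ``spurious two-point configurations'' --- at the outset rather than at the end. That obstruction bites earlier in your scheme than your final paragraph suggests, and it is the reason the paper retreats to the structured cases with extra symmetry.
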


%%%%%%%%%%%%%%%%%%%%%%%%%%%%%%%%%%%%%%%%%%%%%%%%%%%%%%%%%% 
\begin{figure}[!htb]
\begin{center}
\includegraphics[width=\textwidth]{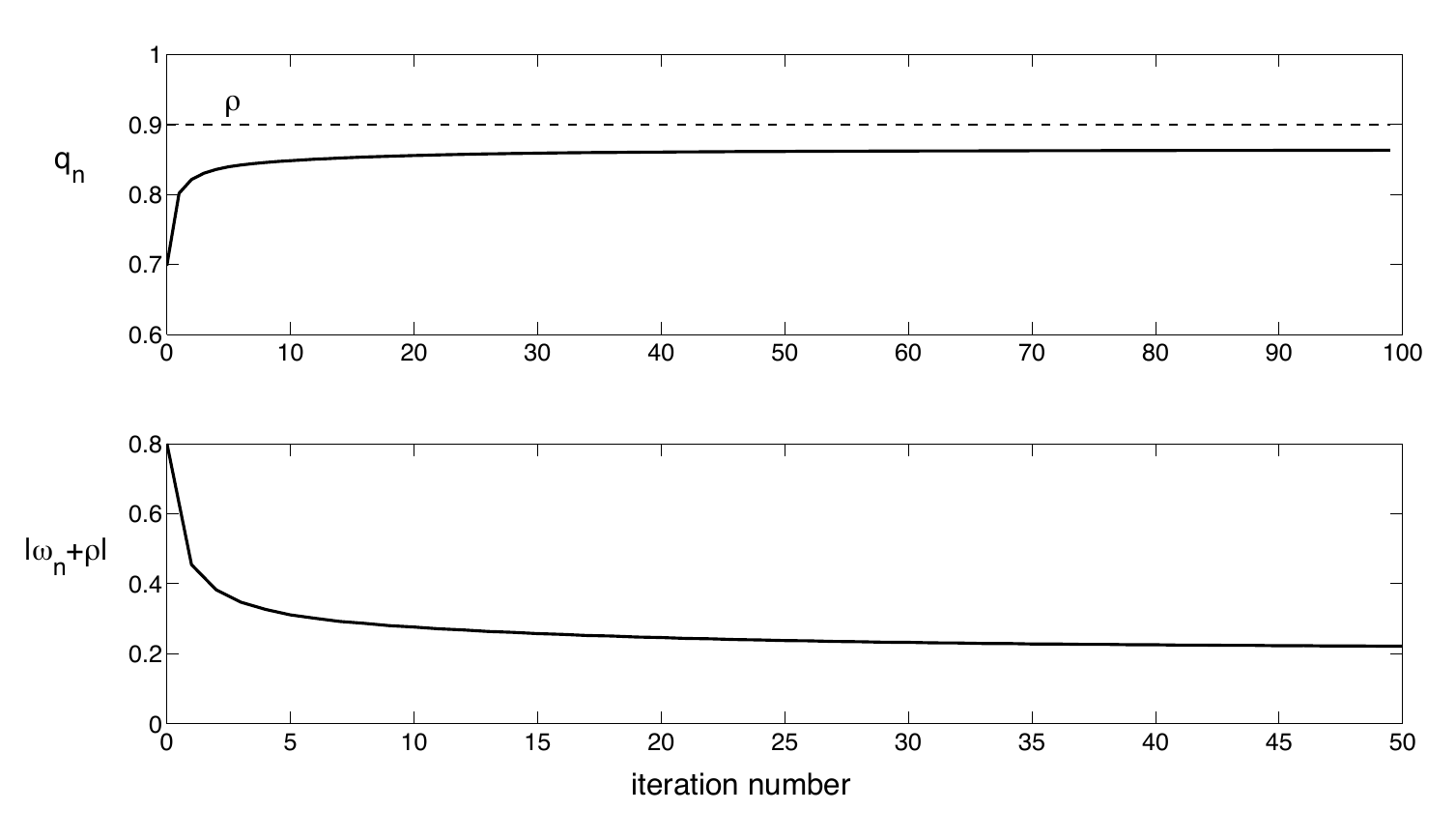}    
\caption{This is an example where $-\rho$ does not belong to $\Hull(\zeta_1,\dots, \zeta_d)$: $\rho=0.9,\ d=15$.} 
\label{fig:nonconv1}
\end{center}
\end{figure}

%%%%%%%%%%%%%%%%%%%%%%%%%%%%%%%%%%%%%%%%%%%%%%%%%%%%%%%%%% 
%%%%%%%%%%%%%%%%%%%%%%%%%%%%%%%%%%%%%%%%%%%%%%%%%%%%%%%%%% 
\begin{figure}[!htb]
\begin{center}
\includegraphics[width=\textwidth]{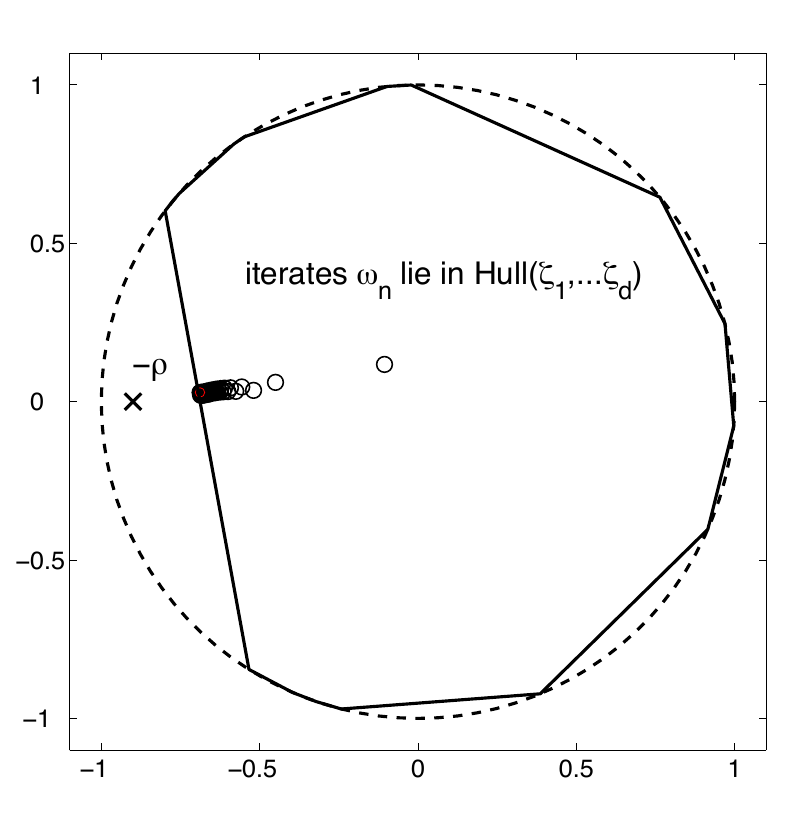}    
\caption{The case when $-\rho$ does not belong to $\Hull(\zeta_1,\dots, \zeta_d)$: $\rho=0.9,\ d=15$.}
\label{fig:nonconv2}
\end{center}
\end{figure}

%%%%%%%%%%%%%%%%%%%%%%%%%%%%%%%%%%%%%%%%%%%%%%%%%%%%%%%%%%

\subsection{The case d=2}
\label{ssec:sis2}
Surprisingly, this case is not completely trivial either.
\begin{proposition}
\label{prop:o1d2}
Assume $d=2$ and the initial vector $r_0\in\C^2$ is arbitrary, with non-zero entries.
Then $q_n$ is a constant depending on $r_0$, while $\omega_n$ is a periodic
sequence with period $2$. The convergence~\eqref{eq:o1tokexamples}
for $k=1$ does not hold in this case.
\end{proposition}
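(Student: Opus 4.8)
The plan is to reduce the whole dynamics to the single scalar $t_n := |r_n^1|^2/|r_n^2|^2$. For $d=2$ with $U$ of the form~\eqref{eq:Udef} we have $U=\mathrm{diag}[1,-1]$, so the eigenvalues are $\mu_1=1+\rho>0$ and $\mu_2=1-\rho>0$. Put $a_n=|r_n^1|^2$ and $b_n=|r_n^2|^2$; these are positive for $n=0$ and, by the recursion below, stay positive for all $n$. Using $r_{n+1}^k=(1-\lambda_n\mu_k)r_n^k$ together with $\lambda_n=(\mu_1 a_n+\mu_2 b_n)/(\mu_1^2 a_n+\mu_2^2 b_n)$ from~\eqref{eq:lambdan}, a short computation gives, with $D_n:=\mu_1^2 a_n+\mu_2^2 b_n$,
\[
1-\lambda_n\mu_1=\frac{-2\rho\,\mu_2\,b_n}{D_n},\qquad
1-\lambda_n\mu_2=\frac{2\rho\,\mu_1\,a_n}{D_n},
\]
hence $a_{n+1}=(2\rho\mu_2 b_n/D_n)^2 a_n$, $b_{n+1}=(2\rho\mu_1 a_n/D_n)^2 b_n$, and therefore
\[
t_{n+1}=\frac{a_{n+1}}{b_{n+1}}=\frac{\mu_2^2}{\mu_1^2\,t_n}
\]
(for a general pair $\zeta_1,\zeta_2$ on the unit circle the same identity holds with $|\mu_i|^2$ in place of $\mu_i^2$).

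This map is an involution of $(0,\infty)$, so $t_{n+2}=t_n$: the sequence $\{t_n\}$ has period~$2$. The $r_n$-measure assigns mass $p_n=t_n/(1+t_n)$ to $\zeta_1=1$ and $1-p_n$ to $\zeta_2=-1$, so $\omega_n=\EE_n(\zeta)=2p_n-1$ is likewise periodic with period~$2$. For $q_n$ I would use $q_n^2=\nnorm{r_{n+1}}^2/\nnorm{r_n}^2=(a_{n+1}+b_{n+1})/(a_n+b_n)$; substituting the formulas above, the factor $D_n$ cancels once and one is left with the closed form
\[
q_n^2=\frac{4\rho^2\,t_n}{(1+t_n)(\mu_1^2 t_n+\mu_2^2)}=:Q(t_n).
\]
A one-line substitution $t\mapsto\mu_2^2/(\mu_1^2 t)$ shows that $Q$ is invariant under the involution above; combined with $t_{n+2}=t_n$ this forces $q_n^2=Q(t_n)=Q(t_0)$ for every $n$, so $q_n$ is constant and depends only on $t_0=|r_0^1|^2/|r_0^2|^2$.

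It remains to see that the limit is not $\rho$. Since $q_n\equiv q_0$, we have $\lim_n q_n=\rho$ iff $Q(t_0)=\rho^2$, which (using $\mu_1^2+\mu_2^2=2+2\rho^2$) amounts to the quadratic $\mu_1^2 t_0^2-2(1-\rho^2)t_0+\mu_2^2=0$; its discriminant is $4(1-\rho^2)^2-4\mu_1^2\mu_2^2=0$, so it has the single root $t_0=\mu_2/\mu_1=(1-\rho)/(1+\rho)$ — precisely the fixed point of the involution, where $\omega_n\equiv-\rho$ and the iteration is degenerately stationary. For every other initial ratio the constant value $q_0$ is strictly below $\rho$; in particular, for the residual $r_0=b=(1,1)^T$ prescribed in Conjecture~\ref{conj:o1tokexamples} (zero initial guess) we have $t_0=1$ and $q_n\equiv\rho/\sqrt{1+\rho^2}<\rho$, so~\eqref{eq:o1tokexamples} indeed fails for $d=2$. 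The only genuinely delicate point here is the algebraic invariance $Q(\mu_2^2/(\mu_1^2 t))=Q(t)$ and keeping track of the one exceptional initial condition; everything else is routine, which is why the case is merely ``not completely trivial.''
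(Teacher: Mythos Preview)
Your proof is correct and follows essentially the same route as the paper: both reduce the dynamics to the single ratio $t_n=|r_n^1|^2/|r_n^2|^2$ (the paper uses the reciprocal $y=|r_0^2|^2/|r_0^1|^2$), derive the involution $t\mapsto |\mu_2|^2/(|\mu_1|^2 t)$, and then observe that the function giving $q_n^2$ in terms of $t_n$ is invariant under that involution, so $q_n$ is constant and $\omega_n$ has period~$2$.

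Two small remarks. First, the proposition sits in Section~4.2, whose standing hypothesis is general $|\zeta_k|=1$; your parenthetical ``replace $\mu_i^2$ by $|\mu_i|^2$'' is exactly right, but strictly speaking the full statement requires that generality, so you might promote it from a parenthesis to the main line. Second, you actually go further than the paper: the paper stops once $q_n$ is shown constant, whereas you identify the unique initial ratio $t_0=\mu_2/\mu_1$ (equivalently $\omega_0=-\rho$) for which $q_n\equiv\rho$, and you compute $q_n\equiv\rho/\sqrt{1+\rho^2}$ for $r_0=(1,1)^T$. That extra paragraph makes the ``convergence fails'' clause explicit rather than implicit, which is a genuine improvement.
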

\begin{proof}
With a rotation, we may assume $\zeta_1=1$ and $\zeta_2=\zeta$ is arbitrary.
Then $\mu_1=1+\rho$ and $\mu_2=1+\rho\zeta$.
We have
$$\lambda_0
=\frac{(r_0,Ar_0)}{(Ar_0,Ar_0)}=\frac{\bar\mu_1 |r_0^1|^2+\bar\mu_2|r_0^2|^2}
{\|Ar_0\|^2},\quad
\omega_0=\frac{|r_0^1|^2+\zeta|r_0^2|^2}{|r_0^1|^2+|r_0^2|^2}\ ,$$
therefore
\[
1-\lambda_0\mu_1=
\frac{-\rho(1-\zeta)\bar\mu_2|r_0^2|^2}
{\|Ar_0\|^2}\ ,
%\qquad
1-\lambda_0\mu_2
=\frac{\rho(1-\zeta)\bar\mu_1|r_0^1|^2}
{\|Ar_0\|^2}\ .
\]
On the other hand $r_1=r_0-\lambda_0Ar_0$, hence
$r_1^1=(1-\lambda_0\mu_1)r_0^1$, and
$r_1^2=(1-\lambda_0\mu_2)r_0^2$.
Therefore
\begin{equation}
\label{eq:r0r1}
\frac{r_1^2}{r_1^1}=
\frac{1-\lambda_0\mu_2}{1-\lambda_0\mu_1}
\cdot\frac{r_0^2}{r_0^1}
=\frac{-\bar\mu_1|r_0^1|^2}{\bar\mu_2|r_0^2|^2}\frac{r_0^2}{r_0^1}
=\frac{-\bar\mu_1}{\bar\mu_2}\frac{\bar r_0^1}{\bar r_0^2}
\Rightarrow
\frac{|r_1^2|}{|r_1^1|}=
\frac{|\mu_1|}{|\mu_2|}
\frac{|r_0^1|}{|r_0^2|}
\ .
\end{equation}
By applying the same procedure to $r_1$ instead of $r_0$, we obtain
\[
\frac{|r_2^2|}{|r_2^1|}=\frac{|r_0^2|}{|r_0^1|}
\Rightarrow \frac{|r_2^1|^2+\zeta|r_2^2|^2}{|r_2^1|^2+|r_2^2|^2}
=\frac{|r_0^1|^2+\zeta|r_0^2|^2}{|r_0^1|^2+|r_0^2|^2}
,\quad \text{i.e. } \omega_2=\omega_0\ .
\]
This shows that the sequence $\omega_n$ is periodic with period $2$.
With the above formulae for $1-\lambda\mu_1$ and $1-\lambda\mu_2$,
we also have
\begin{equation}
\label{quotient}
\frac{\|r_1\|^2}{\|r_0\|^2}=
\frac{|1-\lambda\mu_1|^2|r_0^1|^2+|1-\lambda_1\mu_2|^2|r_0^2|^2}
{|r_0^1|^2+|r_0^2|^2}
=\rho^2|1-\zeta|^2\frac{|r_0^1|^2|r_0^2|^2}{\|r_0\|^2\|Ar_0\|^2}\ .
\end{equation}
Let $y=|r_0^2|^2/|r_0^1|^2$. The above fraction equals, up to a constant,
\[\frac{1}{1+1/y}\cdot
\frac{1}{1+y|\mu_2|^2/|\mu_1|^2}=:g(y)\ .\]
Because of (\ref{eq:r0r1}),
substituting $r_1$ for $r_0$ amounts to substituting $y$ by
$\frac{|\mu_1|^2}{|\mu_2|^2}\frac{1}{y}$. This does not change the
value of $g(y)$, which means that
$\frac{\|r_2\|^2}{\|r_1\|^2}=\frac{\|r_1\|^2}{\|r_0\|^2}$.
This proves that~$q_2=q_1$. Similarly, $q_n=q_{n-1}$  for $n\ge 2$.
\end{proof}

%%%%%%%%%%%%%%%%%%%%%%%%%%%%%%%%%%%%%%%%%%%%%%%%%%%%%%%%%% 

%%%%%%%%%%%%%%%%%%%%%%%%%%%%%%%%%%%%%%%%%%%%%%%%%%%%%%%%%% 
\subsection{Convergence of $q_n$ to $\rho$}
\label{ssec:convfindim}
We have already seen that {\color{black}$\lim_{n\to \infty} q_n=\rho$} if and only if
{\color{black}$\lim_{n\to \infty}\lambda_n=1$}. 
In this section we will work with the quantities
$$\beta_n:=1-\lambda_n,\  \ u_n:=\omega_{n,1},\ \ \mathrm{and}\  v_n:=\omega_{n,2},$$ 
and we formulate sufficient conditions that 
guarantee $\beta_n\to 0$.  We have
\be
\label{eq:resrec}
r_{n+1}=r_n-\lambda_n(I+\rho U)r_n=\beta_nAr_n-\rho Ur_n\ ,
\ee
and
\be
(r_{n+1}, r_{n+1})=(r_{n+1}, r_n-\lambda_n Ar_n)=(r_{n+1}, r_n)\ .
\ee
Further, since $U$ is unitary,
\be
\label{eq:Aineq}
(1-\rho)\le \nnorm{A}\le (1+\rho)\ .
\ee
Now,
\bes
1-\lambda_{n+1} 
&=&
1-\frac{(r_{n+1}, Ar_{n+1})}{\|Ar_{n+1}\|^2}
=\rho\frac{(Ur_{n+1}, Ar_{n+1})}{\|Ar_{n+1}\|^2}
\\
&=&
\rho\frac{(Ur_{n+1}, r_{n+1})+\rho(r_{n+1}, r_{n+1})}
{\|Ar_{n+1}\|^2}
=\rho\frac{(Ur_{n+1}, r_{n+1})+\rho(r_{n+1}, r_n)}
{\|Ar_{n+1}\|^2}
\\
&=&\rho\frac{(Ur_{n+1}, r_{n+1})+(Ur_{n+1}, \rho Ur_n)}
{\|Ar_{n+1}\|^2}
=\rho\frac{(Ur_{n+1}, r_{n+1}+\rho Ur_n)}
{\|Ar_{n+1}\|^2}
\\
&\stackrel{\eqref{eq:resrec}}{=}&
\rho\frac{(Ur_{n+1}, (1-\lambda_n)Ar_n)}{\|Ar_{n+1}\|^2}
=\rho(1-\bar\lambda_n)
\frac{(Ur_{n+1}, Ar_n)}
{\|Ar_{n+1}\|^2} \;.
\ees
Therefore
\bes
\beta_{n+1}&=&
\rho\bar\beta_n
\frac{(Ur_{n+1}, Ar_n)}{\|Ar_{n+1}\|^2}
=\rho \bar\beta_n
\frac{(U(\beta_n(I+\rho U)r_n-\rho Ur_n), (I+\rho U)r_n)}
{\|Ar_{n+1}\|^2}
\\
&=& 
\rho\bar\beta_n
\frac{\beta_n((U+\rho U^2)r_n, (I+\rho U)r_n)
-(\rho U^2r_n, (I+\rho U)r_n)}
{\|Ar_{n+1}\|^2}
\\
&=&
\rho\bar\beta_n(\beta_n((1+\rho^2)u_n+\rho(1+v_n))-\rho^2u_n-\rho v_n)
\frac{\|r_n\|^2}{\|Ar_{n+1}\|^2}\;.
\ees
Next, the statement
\bes
\|r_{n+1}\|=\|\beta_nAr_n-\rho Ur_n\|
\geq \rho\|Ur_n\|-\|\beta_nAr_n\|
\geq\|r_n\|(\rho-|\beta_n|(1+\rho))
\ees
implies
\bes
\frac{\|r_n\|}{\|Ar_{n+1}\|}
=\frac{\|r_n\|}{\|r_{n+1}\|}
\frac{\|r_{n+1}\|}{\|Ar_{n+1}\|}
\leq
\frac{1}{(\rho-\abs{\beta_n}(1+\rho))(1-\rho)} \ .
\ees
Therefore
\begin{equation}
\label{eq:bineq}
|\beta_{n+1}|
\leq 
\rho|\beta_n|
\frac{(|\beta_n|((1+\rho^2)|u_n|+\rho(1+|v_n|))+\rho^2|u_n|+\rho|v_n|}
{(\rho-|\beta_n|(1+\rho))^2(1-\rho)^2}\ .
\end{equation}
Next we need to estimate $\abs{u_n}, \abs{v_n}$. We have 
\bes
u_{n+1}
&=&
\frac{(Ur_{n+1}, r_{n+1})}{(r_{n+1}, r_{n+1})}
=\frac{(U(\beta_nAr_n-\rho Ur_n), \beta_nAr_n-\rho Ur_n)}
{\|r_{n+1}\|^2}
\\
&=&
\frac{|\beta_n|^2(AUr_n, Ar_n)-\rho(\beta_n(Ar_n,r_n)
+\bar\beta_n(U^2r_n, Ar_n))
+\rho^2(Ur_n,r_n)}
{\|r_{n+1}\|^2}\;,
\ees
hence
\be
|u_{n+1}|
&=& \left(|\beta_n|^2\|A\|^2+2\rho|\beta_n|\cdot\|A\|+\rho^2|u_n|\right)
\frac{\|r_n\|^2}{\|r_{n+1}\|^2}
\\
&\leq&
\frac{|\beta_n|^2(1+\rho)^2+2\rho|\beta_n|(1+\rho)+\rho^2|u_n|}
{(\rho-|\beta_n|(1+\rho))^2}\,.
\ee
The analogous inequality can be derived for $v_n$. 
We summarize the previous inequalities in 
%%%%%%%%%%%%%%%%%%%%%%%%%%%%%%%%%%%%%%%%%%%%%%%%%%%%%%%%%% 
\begin{proposition}
\label{andrei_inequality}
The following recurrence relations hold:
\begin{equation}
\begin{split}
|\beta_{n+1}|&\leq
\rho|\beta_n|\cdot
\frac{|\beta_n|[(1+\rho^2)|u_n|+\rho(1+|v_n|)]+\rho^2|u_n|+\rho|v_n|}
{[\rho-|\beta_n|(1+\rho)]^2(1-\rho)^2}\ ,
\\
|u_{n+1}|&\leq
\frac{|\beta_n|^2(1+\rho)^2+2\rho|\beta_n|(1+\rho)+\rho^2|u_n|}
{(\rho-|\beta_n|(1+\rho))^2}\ ,
\\
|v_{n+1}|&\leq 
\frac{|\beta_n|^2(1+\rho)^2+2\rho|\beta_n|(1+\rho)+\rho^2|v_n|}
{(\rho-|\beta_n|(1+\rho))^2}\ .
\end{split}
\end{equation}
\end{proposition}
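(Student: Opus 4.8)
The plan is to read all three recurrences off the residual identity \eqref{eq:resrec}, $r_{n+1}=\beta_n A r_n-\rho U r_n$, exactly as in the computation displayed just above the statement; the proposition is essentially a bookkeeping step that packages those estimates into a form suitable for the bootstrap argument of the next subsection. I will organize the work in three blocks: (1) a closed expression for $\beta_{n+1}$ in terms of $\beta_n,u_n,v_n$ and the norm ratio $\nnorm{r_n}^2/\nnorm{A r_{n+1}}^2$; (2) two auxiliary norm inequalities controlling that ratio and $\nnorm{r_{n+1}}$; (3) the analogous expansions for $u_{n+1}$ and $v_{n+1}$.

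For block (1), I would start from $\beta_{n+1}=1-\lambda_{n+1}=\rho\,(U r_{n+1},A r_{n+1})/\nnorm{A r_{n+1}}^2$, expand $A r_{n+1}=r_{n+1}+\rho U r_{n+1}$, use the orthogonality $\nnorm{r_{n+1}}^2=(r_{n+1},r_n)$, rewrite $\rho(r_{n+1},r_n)=(U r_{n+1},\rho U r_n)$ by unitarity of $U$, and use $r_{n+1}+\rho U r_n=\beta_n A r_n$ to arrive at $\beta_{n+1}=\rho\bar\beta_n\,(U r_{n+1},A r_n)/\nnorm{A r_{n+1}}^2$. Substituting $U r_{n+1}=\beta_n(U+\rho U^2)r_n-\rho U^2 r_n$ and expanding the two inner products via the relations $(U^a r_n,U^b r_n)=\nnorm{r_n}^2\,\omega_{n,a-b}$ gives
$$\beta_{n+1}=\rho\bar\beta_n\Bigl(\beta_n\bigl[(1+\rho^2)u_n+\rho(1+v_n)\bigr]-\rho^2 u_n-\rho v_n\Bigr)\frac{\nnorm{r_n}^2}{\nnorm{A r_{n+1}}^2}.$$

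Block (2) supplies the two estimates: (i) $\nnorm{A r_{n+1}}\ge(1-\rho)\nnorm{r_{n+1}}$, from $\nnorm{(I+\rho U)^{-1}}\le(1-\rho)^{-1}$; and (ii) $\nnorm{r_{n+1}}\ge\nnorm{r_n}\bigl(\rho-\abs{\beta_n}(1+\rho)\bigr)$, from the reverse triangle inequality applied to $\beta_n A r_n-\rho U r_n$ together with $\nnorm{U r_n}=\nnorm{r_n}$ and \eqref{eq:Aineq}. Multiplying gives $\nnorm{r_n}/\nnorm{A r_{n+1}}\le\bigl[(\rho-\abs{\beta_n}(1+\rho))(1-\rho)\bigr]^{-1}$; bounding the parenthesized factor in the displayed formula by the triangle inequality and inserting the square of this estimate yields the first inequality of the proposition. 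For block (3), I would write $u_{n+1}=(U r_{n+1},r_{n+1})/\nnorm{r_{n+1}}^2$ and $v_{n+1}=(U^2 r_{n+1},r_{n+1})/\nnorm{r_{n+1}}^2$, substitute $r_{n+1}=\beta_n A r_n-\rho U r_n$ into both arguments, expand into four terms each, and bound them using $[U,A]=0$, $\nnorm{U^j r_n}=\nnorm{r_n}$, $\nnorm{A}\le1+\rho$, and $\abs{(U^j r_n,r_n)}\le\nnorm{r_n}^2$, observing that $u_n$ (resp.\ $v_n$) reappears only through the single term $\rho^2(U r_n,r_n)$ (resp.\ $\rho^2(U^2 r_n,r_n)$); dividing by $\nnorm{r_{n+1}}^2\ge\nnorm{r_n}^2(\rho-\abs{\beta_n}(1+\rho))^2$ from (ii) gives the last two inequalities.

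I do not expect a genuine obstacle: the proposition is a collation step, and the proof is a sequence of expansions. The only points requiring attention are the consistent use of the sesquilinear convention together with $U^{*}=U^{-1}$ and $[U,A]=0$ (which is what collapses every stray inner product to a moment $\omega_{n,j}$ or a residual norm), and the fact that each inequality is vacuous unless $\abs{\beta_n}(1+\rho)<\rho$. Thus this proposition records the algebra but deliberately leaves the conclusion $\beta_n\to0$ (equivalently $q_n\to\rho$) to the subsequent argument, which must first guarantee that $\abs{\beta_n}$ enters and remains in the regime $\abs{\beta_n}<\rho/(1+\rho)$ where these recurrences are contractive.
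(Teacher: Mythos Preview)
Your proposal is correct and follows exactly the approach the paper takes: the proposition is indeed a summary of the computations carried out in the text immediately preceding it, and your three blocks (closed formula for $\beta_{n+1}$ via the chain $1-\lambda_{n+1}\to\rho\bar\beta_n(Ur_{n+1},Ar_n)/\nnorm{Ar_{n+1}}^2$, the two norm estimates from \eqref{eq:Aineq} and the reverse triangle inequality, and the four-term expansion of $u_{n+1},v_{n+1}$) reproduce those computations step for step. Your remark that the inequalities are only informative once $\abs{\beta_n}(1+\rho)<\rho$ is also exactly the point, and is what the subsequent induction (Proposition~\ref{prop:ortho1technical}) secures.
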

%%%%%%%%%%%%%%%%%%%%%%%%%%%%%%%%%%%%%%%%%%%%%%%%%%%%%%%%%% 
We will also need the following inequality which we state without proof. 
\begin{lemma}
For $|x|\leq 0.1$, $\frac{1}{(1-x)^2}\leq 1+Cx$, with $C=2.5$.
\end{lemma}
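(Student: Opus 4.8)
\emph{Proof proposal.} The plan is to clear the (strictly positive) denominator $(1-x)^2$ and reduce the claim to an elementary polynomial inequality. Observe first that the estimate is only meaningful — and is only invoked later — for $x$ in the range $0\le x\le 0.1$: in Proposition~\ref{andrei_inequality} the quantity playing the role of $x$ is $|\beta_n|(1+\rho)/\rho\ge 0$, since $\rho-|\beta_n|(1+\rho)=\rho\bigl(1-|\beta_n|(1+\rho)/\rho\bigr)$. (For $x<0$ the asserted inequality is in fact false — e.g.\ at $x=-0.1$ the left side is $1/1.21\approx 0.826$ while the right side is $0.75$ — so the hypothesis should be read as $0\le x\le 0.1$.) For such $x$ one has $(1-x)^2\ge(0.9)^2>0$, so $\frac{1}{(1-x)^2}\le 1+Cx$ is equivalent, after multiplying by $(1-x)^2$, to $1\le(1+Cx)(1-x)^2$.

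Next I would expand the right-hand side with $C=2.5$:
\[
(1+2.5x)(1-x)^2 = 1 + 0.5x - 4x^2 + 2.5x^3 = 1 + x\bigl(0.5 - 4x + 2.5x^2\bigr),
\]
so the claim becomes simply $x\bigl(0.5-4x+2.5x^2\bigr)\ge 0$. Since $x\ge 0$, it suffices to check $0.5-4x+2.5x^2\ge 0$ on $[0,0.1]$, and this is immediate from the crude bound $0.5-4x+2.5x^2\ge 0.5-4\cdot 0.1+0 = 0.1>0$, obtained by discarding the nonnegative term $2.5x^2$ and using $x\le 0.1$ in $-4x$. This establishes the lemma.

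There is essentially no obstacle here: the only point requiring care is the sign of $x$, since the inequality genuinely fails for negative $x$; once one restricts to $x\ge 0$ (which matches the use of the lemma), everything collapses to the one-line estimate above. If a sharper statement were wanted, one could instead locate the roots $x=\tfrac{4\pm\sqrt{11}}{5}$ of $2.5x^2-4x+0.5$ and note that the smaller root $\approx 0.137$ exceeds $0.1$, but the crude bound is amply sufficient for the intended application.
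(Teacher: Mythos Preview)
Your argument is correct, and in fact the paper offers no proof of this lemma at all (it is ``state[d] without proof''), so there is nothing to compare against. Your observation that the stated hypothesis $|x|\le 0.1$ cannot be right---the inequality genuinely fails for $x<0$---is a valid correction to the paper; the only use of the lemma is indeed with $x=\rho^{n+1}(1+\rho)\ge 0$, so restricting to $0\le x\le 0.1$ is both necessary and sufficient for the application.
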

%%%%%%%%%%%%%%%%%%%%%%%%%%%%%%%%%%%%%%%%%%%%%%%%%%%%%%%%%% 
\begin{proposition}
\label{prop:ortho1technical}
Assume the following:
$0<\rho<0.1$, and 
$\omega_{0,1}=\omega_{0,2}=\omega_{0,3}=0$.
Then, for $n\geq 1$, we have:
\begin{enumerate}\vspace{7pt}
\item[\textnormal{(}i\textnormal{)}] $|u_n|\leq \rho+2.7\sum_{k=2}^n\rho^k\leq \rho+3\rho^2$\ \textnormal{;}\ \vspace{7pt}
\item[\textnormal{(}ii\textnormal{)}] $|v_n|\leq2.7\sum_{k=2}^n\rho^k\leq 3\rho^2$\ \textnormal{;} \vspace{7pt}
\item[\textnormal{(}iii\textnormal{)}] $|\beta_n|\leq\rho^{n+2}$.\vspace{7pt}
\end{enumerate}
\end{proposition}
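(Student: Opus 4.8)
\emph{Proof plan.} I would prove (i)--(iii) simultaneously by induction on $n$, taking the three recurrences of Proposition~\ref{andrei_inequality} as the engine of the inductive step, using the elementary estimate $(1-x)^{-2}\le 1+2.5x$ ($|x|\le 0.1$, stated just above) to control the denominators, and handling the base case $n=1$ by a direct computation. A separate base-case argument is unavoidable because the recurrence of Proposition~\ref{andrei_inequality} is too lossy to deduce (i) at $n=1$ from the data at $n=0$: its cross term $2\rho|\beta_0|(1+\rho)$, with $|\beta_0|$ of size $\rho^2$, already contributes $\approx 2\rho$ to the bound for $|u_1|$, which would overshoot the required $|u_1|\le\rho$.

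\emph{Base case.} First I would compute $n=1$ exactly. From $\omega_{0,1}=0$ we get $\omega_0=0$, so the lemma expressing $\lambda_n,T_n$ through $\omega_n$ gives $\lambda_0=1/(1+\rho^2)$, $T_0=\rho$, $\beta_0=\rho^2/(1+\rho^2)$. Feeding $n=0$ into the moment recurrence~\eqref{eq:omegarec} and using $\omega_{0,0}=1$ together with $\omega_{0,1}=\omega_{0,2}=\omega_{0,3}=0$ collapses it to $\omega_{1,j}=\bigl((1+\rho^2)\omega_{0,j}-\rho\omega_{0,j-1}-\rho\omega_{0,j+1}\bigr)/(1+\rho^2)$, hence
$$u_1=\omega_{1,1}=\frac{-\rho}{1+\rho^2},\qquad v_1=\omega_{1,2}=0.$$
This is the only place the hypothesis $\omega_{0,3}=0$ enters --- it is precisely what forces $v_1=0$, which (ii) needs at $n=1$ (the sum $\sum_{k=2}^1$ being empty). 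Since $\omega_1=u_1$ is real, the same lemma gives $\lambda_1=1/(1+\rho^4)$, so $\beta_1=\rho^4/(1+\rho^4)\le\rho^3$. Thus $|u_1|\le\rho$, $|v_1|=0$, $|\beta_1|\le\rho^3$, which are exactly (i)--(iii) at $n=1$.

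\emph{Inductive step.} Then I would run the step $n\to n+1$ for $n\ge1$, assuming (i)--(iii) at $n$. Put $S_n=\rho+2.7\sum_{k=2}^n\rho^k$ and $\widetilde S_n=2.7\sum_{k=2}^n\rho^k$; since $2.7/(1-\rho)\le 3$ the hypothesis gives $|u_n|\le S_n\le\rho+3\rho^2$, $|v_n|\le\widetilde S_n\le 3\rho^2$, and $|\beta_n|\le\rho^{n+2}\le\rho^3$. The last bound makes $\rho-|\beta_n|(1+\rho)\ge\rho(1-1.1\rho^2)>0$, and with $x:=(1+\rho)\rho^{n+1}\le 1.1\rho^2<0.1$ the elementary estimate gives $(\rho-|\beta_n|(1+\rho))^{-2}\le\rho^{-2}(1+2.75\rho^{n+1})$. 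For (iii), I would plug $|\beta_n|\le\rho^{n+2}$ and the sharp forms $|u_n|\le\rho+3\rho^2$, $|v_n|\le3\rho^2$ into the first recurrence of Proposition~\ref{andrei_inequality}: the numerator of the big fraction is then $\le 5\rho^3$ (here $n\ge1$ is used to absorb $\rho^{n+2}\le\rho^3$) while the denominator $[\rho-|\beta_n|(1+\rho)]^2(1-\rho)^2$ is $\ge 0.79\rho^2$, so $|\beta_{n+1}|\le\rho|\beta_n|\cdot 5\rho^3/(0.79\rho^2)<7\rho^{n+4}\le\rho^{n+3}$. For (i) and (ii) the two recurrences are identical in form, so I would treat $|u_{n+1}|$: using $|\beta_n|\le\rho^{n+2}$ in the numerator and the denominator bound above,
$$|u_{n+1}|\le(1+2.75\rho^{n+1})\bigl(1.21\rho^{2n+2}+2.2\rho^{n+1}+S_n\bigr)=S_n+\rho^{n+1}\bigl(2.2+2.75\,S_n+\varepsilon_n\bigr),$$
where $\varepsilon_n=7.26\rho^{n+1}+3.33\rho^{2n+2}\le 0.08$ for $n\ge1$. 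Since $2.2+2.75S_n+\varepsilon_n\le 2.2+2.75(\rho+3\rho^2)+0.08<2.7$, this yields $|u_{n+1}|\le S_n+2.7\rho^{n+1}=S_{n+1}$; the identical estimate with $\widetilde S_n$ for $S_n$ gives $|v_{n+1}|\le\widetilde S_{n+1}$. (Along the way $|\omega_n|=|u_n|<1$, so $q_n>0$ and every $r_n\neq0$, keeping the measures and recurrences legitimate.) This closes the induction.

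\emph{Main obstacle.} The hard part is not any single step but making the constant $2.7$ in (i)--(ii) close: in the increment $|u_{n+1}|-S_n$ the coefficient of $\rho^{n+1}$ receives $\approx2.2$ from the cross term $2\rho|\beta_n|(1+\rho)$ in Proposition~\ref{andrei_inequality} plus an extra $2.75\,S_n$ from the amplification factor $(1+2.75\rho^{n+1})$ multiplying $S_n$, and $S_n$ can be as large as $\approx\rho+3\rho^2$; checking $2.2+2.75(\rho+3\rho^2)+(\text{lower order})<2.7$ is precisely what forces $\rho<0.1$ and exploits the constant $2.5$ of the elementary lemma. Everything else is routine bookkeeping once the bound $|\beta_n|\le\rho^{n+2}$ has been bootstrapped from $n=1$.
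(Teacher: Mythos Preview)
Your proposal is correct and follows the paper's proof essentially verbatim: the same base-case computation from the moment recurrence~\eqref{eq:omegarec} yielding $u_1=-\rho/(1+\rho^2)$, $v_1=0$, $\beta_1=\rho^4/(1+\rho^4)$, followed by the same induction scheme driven by Proposition~\ref{andrei_inequality} and the lemma $(1-x)^{-2}\le 1+2.5x$. The only differences are cosmetic bookkeeping---the paper organizes the inductive step for (i) around the single variable $x=\rho^{n+1}(1+\rho)$, writing the bound as $(1+Cx)(x^2+2x+|u_n|)$ and estimating the bracket $2+C|u_n|+(2C+1)x+Cx^2<2.5$ before multiplying by $(1+\rho)$, whereas you track the constants $1.21$, $2.2$, $2.75$ directly---and your explicit remark on why the base case must be computed separately, which the paper leaves implicit.
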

%%%%%%%%%%%%%%%%%%%%%%%%%%%%%%%%%%%%%%%%%%%%%%%%%%%%%%%%%% 
\begin{proof}
We use the recurrence relations (\ref{eq:omegarec}) to 
compute the first few terms in the sequences $\beta_n, u_n, v_n$. 
\[\begin{split}
&\tau_0=\frac{\omega_{0,1}+\rho}{\rho \bar\omega_{0,1}+1}=\rho\ ,\quad
\beta_0=\frac{\rho \tau_0}{1+\rho \tau_0}=\frac{\rho^2}{1+\rho^2}\ ,
\\
&u_1=\omega_{1,1}=
\frac{(1+|\tau_0|^2) \omega_{0,1}-\tau_0\bar \omega_{0,0}-\bar \tau_0\omega_{0,2}}
{1+|\tau_0|^2-2\Re(\bar \tau_0\omega_{0,1})}=\frac{-\rho}{1+\rho^2}\ ,
\\
&
v_1=\omega_{1,2}=
\frac{(1+|\tau_0|^2) \omega_{0,2}-\tau_0\bar \omega_{0,1}-\bar \tau_0\omega_{0,3}}
{1+|\tau_0|^2-2\Re(\bar \tau_0\omega_{0,1})}=0\ ,
\\
&\tau_1=\frac{\omega_{1,1}+1}{\rho\bar\omega_{1,1}+1}
=\rho^3,\quad
\beta_1=\frac{\rho \tau_1}{1+\rho \tau_1}=\frac{\rho^4}{1+\rho^4}\ .
\end{split}\]
The inequalities in the proposition are thus true for $n=1$, and
we proceed by induction.  We assume that the statements (i-iii) are true for
some $n\geq 1$, and we prove that they hold for $n+1$ as well.
For that, we rely on the inequalities of 
Proposition \ref{andrei_inequality}. 
We start with the inequality (iii):
\[\begin{split}
|\beta_{n+1}|
&\leq
\rho^{n+3}\times
\frac{\rho^{n+2}[(1+\rho^2)(\rho+3\rho^2)+\rho(1+3\rho^2)]
+\rho^2(\rho+3\rho^2)+3\rho^3}
{[\rho-\rho^{n+2}(1+\rho)]^2(1-\rho)^2}
\\
&=
\rho^{n+3}\times
\frac{\rho^{n+1}[(1+\rho^2)(1+3\rho)+1+3\rho^2]
+\rho(1+3\rho)+3\rho}
{[1-\rho^{n+1}(1+\rho)]^2(1-\rho)^2}
\\
&\leq
\rho^{n+3}\times
\frac{\rho^2[(1+\rho^2)(1+3\rho)+1+3\rho^2]
+\rho(1+3\rho)+3\rho}
{[1-\rho^2(1+\rho)]^2(1-\rho)^2}\ .
\end{split}\]
The fraction on the right hand side has numerator equal to
$4\rho+5\rho^2+3\rho^3+4\rho^4+3\rho^5$. 
This is easily seen to be less than $0.5$, as $0<\rho<0.1$. 
On the other hand, the denominator is certainly greater than
$0.9^2\times(1-\frac{1.1}{100})^2>0.7$. Therefore the fraction on 
right hand side is less than $1$, and $|\beta_{n+1}|\leq\rho^{n+3}$.
\\\indent
For inequality (ii),
\[\begin{split}
|u_{n+1}|&\leq
\frac{\rho^{2(n+2)}(1+\rho)^2+2\rho^{n+3}(1+\rho)+\rho^2|u_n|}
{[\rho-\rho^{n+2}(1+\rho)]^2}
\\
&=
\frac{\rho^{2(n+1)}(1+\rho)^2+2\rho^{n+1}(1+\rho)+|u_n|}
{[1-\rho^{n+1}(1+\rho)]^2}
\\
&=\frac{x^2+2x+|u_n|}{(1-x)^2}\ ,\quad \text{with}\; x=\rho^{n+1}(1+\rho),
\\
&\leq
(1+Cx)(x^2+2x+|u_n|)\ ,\quad\text{with}\; C=2.5,
\\
&=
|u_n|+x[2+C|u_n|+(2C+1)x+Cx^2]\ .
\end{split}\]
From the induction step, $|u_n|\leq \rho+3\rho^2$. 
Also, $x=\rho^{n+1}(1+\rho)\leq\rho^2(1+\rho)$. 
The quantity inside the square brackets is less
than 
$$2+C(\rho+3\rho^2)+(2C+1)\rho^2(1+\rho)+C\rho^4(1+\rho)^2\ .$$ 
As $0<\rho<1$, this is easily seen to be less than $2.5$. Therefore,
$$|u_{n+1}|\leq |u_n|+2.5(1+\rho)\rho^{n+1}<|u_n|+2.7\rho^{n+1}\ .$$ 
Hence $|u_{n+1}|\leq |u_1|+2.7\sum_{k=2}^{n+1}\rho^k$. 
The exact same method is applied to $v_{n+1}$.
\end{proof}

%%%%%%%%%%%%%%%%%%%%%%%%%%%%%%%%%%%%%%%%%%%%%%%%%%%%%%%%%%
%%%%%%%%%%%%%%%%%%%%%%%%%%%%%%%%%%%%%%%%%%%%%%%%%%%%%%%%%% 
\begin{theorem}
\label{th:ortho1proof}
Assume the following hold:
\begin{enumerate}
\item[\textnormal{(}a\textnormal{)}] $0<\rho<0.1$\textnormal{;}
\item[\textnormal{(}b\textnormal{)}] $d\geq 4$\textnormal{;}
\item[\textnormal{(}c\textnormal{)}] $r_0=[1,\dots, 1]^T$\textnormal{;}
\item[\textnormal{(}d\textnormal{)}] $\zeta_k$ are the roots of unity of order $d$\textnormal{;}
\item[\textnormal{(}e\textnormal{)}] $A=I+\rho U$, $U=diag([\zeta_1,\dots, \zeta_d])$.
\end{enumerate}
Then the sequence 
$r_{n+1}=r_n-\Pi_{Ar_n}r_n$
satisfies
\begin{equation}
\lim_{n\to\infty}
\frac{\|r_{n+1}\|}{\|r_n\|}=\rho\ .
\end{equation}
\end{theorem}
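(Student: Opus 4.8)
The plan is to obtain the theorem as a short deduction from two results already in hand, Proposition~\ref{th:convequiv} and Proposition~\ref{prop:ortho1technical}. By Proposition~\ref{th:convequiv} the sought conclusion $\lim_n \|r_{n+1}\|/\|r_n\| = \lim_n q_n = \rho$ is equivalent to $\lim_n \lambda_n = 1$, that is, to $\beta_n := 1-\lambda_n \to 0$. Proposition~\ref{prop:ortho1technical} delivers exactly such a decay, in fact the quantitative bound $|\beta_n| \le \rho^{n+2}$ for $n \ge 1$, under its two standing hypotheses: $0<\rho<0.1$ and $\omega_{0,1}=\omega_{0,2}=\omega_{0,3}=0$. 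The first of these is hypothesis (a) of the theorem, so the whole proof reduces to checking that, under hypotheses (b)--(e), the first three moments of the initial $r_0$-measure vanish.

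This verification is a one-line root-of-unity computation. Since $r_0 = [1,\dots,1]^T$ we have $|r_0^k|^2 = 1$ for every $k$, so the $r_0$-measure is the uniform measure on $\{1,\dots,d\}$; and with $\zeta_k = \zeta_d^{k-1}$ the $d$-th roots of unity, definition~\eqref{eq:defomegank} gives
\[
\omega_{0,j} \;=\; \frac{(U^j r_0,\, r_0)}{(r_0,\, r_0)} \;=\; \frac1d\sum_{k=0}^{d-1}\zeta_d^{kj}\,,
\]
a geometric sum that equals $1$ when $d \mid j$ and $0$ otherwise. Because $d \ge 4$, the integer $d$ divides none of $1,2,3$, hence $\omega_{0,1}=\omega_{0,2}=\omega_{0,3}=0$. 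This is precisely where hypothesis (b) is needed: the induction in Proposition~\ref{prop:ortho1technical} is seeded by these three vanishing moments (they force $T_0 = \rho$, $u_1 = -\rho/(1+\rho^2)$, $v_1 = 0$ at the start of its proof).

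With both hypotheses of Proposition~\ref{prop:ortho1technical} now in force, we get $|\beta_n| \le \rho^{n+2}$ for all $n \ge 1$; since $0<\rho<1$ this yields $\beta_n \to 0$, i.e.\ $\lambda_n \to 1$. Applying the implication (c)$\Rightarrow$(a) of Proposition~\ref{th:convequiv} then gives $\lim_{n\to\infty}\|r_{n+1}\|/\|r_n\| = \lim_{n\to\infty} q_n = \rho$, which is the assertion of the theorem.

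Regarding the main obstacle: essentially none remains at this point, because all of the genuinely hard analysis --- the coupled recursions for $|\beta_n|$, $|u_n|$, $|v_n|$ of Proposition~\ref{andrei_inequality} and the delicate inductive estimates of Proposition~\ref{prop:ortho1technical} --- has already been carried out. The only content specific to this theorem is recognizing that the choices $d \ge 4$ and $r_0 = [1,\dots,1]^T$ are exactly what make the three seed moments vanish, which is the elementary computation displayed above.
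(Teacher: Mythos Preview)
Your proposal is correct and follows exactly the same approach as the paper: verify that hypotheses (b)--(e) force $\omega_{0,1}=\omega_{0,2}=\omega_{0,3}=0$, invoke Proposition~\ref{prop:ortho1technical} to obtain $\beta_n\to 0$, and then apply Proposition~\ref{th:convequiv} to conclude $q_n\to\rho$. The only difference is that you spell out the root-of-unity computation for the vanishing moments, whereas the paper leaves this implicit.
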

%%%%%%%%%%%%%%%%%%%%%%%%%%%%%%%%%%%%%%%%%%%%%%%%%%%%%%%%%% 
\begin{proof}
%\indent{\em Proof}.
The hypotheses ensure that
$\omega_{0,1}=\omega_{0,2}=\omega_{0,3}=0$.
Proposition~\ref{prop:ortho1technical} then applies to show 
\[
{\color{black}\lim_{n\to\infty}\beta_n= 0\Rightarrow\lim_{n\to\infty}\lambda_n=1
\stackrel{(\ref{eq:convequiv})}{\Rightarrow} \lim_{n\to\infty}q_n=\rho\ .
%\qquad\endproof
}
\]
\end{proof}
%%%%%%%%%%%%%%%%%%%%%%%%%%%%%%%%%%%%%%%%%%%%%%%%%%%%%%%%%% 
%%%%%%%%%%%%%%%%%%%%%%%%%%%%%%%%%%%%%%%%%%%%%%%%%%%%%%%%%%  
{\color{black}
Note that Theorem~\ref{th:ortho1proof} is a step towards proving Conjecture~\ref{conj:hull} 
for the case when $\zeta_1,\dots,\zeta_d$ are the roots of unity. However, we
should point out that if $d$ is even, then  $-\rho\in \mathrm{Hull}(\zeta_1,\dots,\zeta_d)$ holds for 
all $0 < \rho < 1$; for odd $d$, then $-\rho\in \mathrm{Hull}(\zeta_1,\dots,\zeta_d)$ 
for  $0<\rho\le \cos(\pi/d)$. In Theorem~\ref{th:ortho1proof} 
we show the desired convergence holds for the more restrictive condition $0<\rho<0.1$; thus,
the more general case stated in the Conjecture~\ref{conj:hull} remains an open question, 
even for this example.
}

\section*{Conclusions}
For $k\in \N$ we give examples of linear systems 
%(both finite and infinite-dimensional) 
for which we conjectured that Orthomin(1),
$\dots$, Orthomin($k$) achieve the same asymptotic convergence
rate. These examples show that, in general, Orthomin($k$) does not converge
faster than Orthomin(1). We analyze in detail the convergence of
Orthomin(1) and provide numerical evidence in support of our
conjectures with respect to Orthomin($k$) for $k>1$.  The analysis for
Orthomin(1) is fairly complicated and we do not see a straightforward
way to extend the arguments to  Orthomin($k$) for $k>1$. 
We provide numerical evidence that certain normal operators
(related to numerical PDEs) with spectrum lying on an ellipse,
have the following property: Orthomin($2$), Orthomin($3$), etc.
all have the same asymptotic convergence rate (depending
only on the ellipse); moreover this is smaller than the
asymptotic convergence rate of Orthomin($1$). This example offers a  promising
path to finding improved convergence rate estimates for Orthomin(2)
under additional assumptions on the spectrum/field of values of the matrix.
An important question, which remains unanswered, is whether there are 
applications where Orthomin($k$), perhaps coupled with preconditioners,
can compete with the usual iterative solvers for non-symmetric systems.

\appendix
\section{\color{black}Numerical evidence supporting Conjecture~\ref{conj:o1tokexamples}}
\label{sec:numresults}
{\color{black}In order to verify numerically the validity of Conjecture~\ref{conj:o1tokexamples},
we conducted several experiments with Orthomin($k$) for the system~\eqref{eq:sysmainex} with $U$ as 
in~\eqref{eq:Udef}; the right hand side is $\lbrack 1, 1\dots, 1\rbrack^T$, and the initial guess is zero, but we
also conducted experiments with random right hand side and initial guess, and the outcomes were very similar. The tolerance was set at $10^{-8}$.
We report the results of computations for $k\in \{1, 2, 3, 7, 11, 13\}$,
$\rho\in \{0.2, 0.5, 0.8\}$, and $d=16$ in Figure~\ref{fig:conv_Omin_d16}, $d=32$ in Figure~\ref{fig:conv_Omin_d32},
and $d=64$ in Figure~\ref{fig:conv_Omin_d64}, respectively, for a total 54 cases. For each case we record the residual norms
and we compare the ratios $q_n=\|r_{n+1}^{(k)}\|/\|r_{n}^{(k)}\|$
of successive residual norms. In Figures~\ref{fig:conv_Omin_d16}--\ref{fig:conv_Omin_d64}
we plot the quantities
$
\log_{10}|q_n-\rho|
$
for each case.

We should point out that for Orthomin($k$) with $k=1, 2, 3$ and all the values of $\rho$ and $d$ that we considered, we have a rapid convergence 
of $q_n$ to $\rho$. However, for $d=16$ and $k\in\{7, 11, 13\}$, 
we notice in Figure~\ref{fig:conv_Omin_d16} that $\lim_{n\to\infty}q_n = \rho$ for the smaller value $\rho=0.2$, but this convergence does not appear to hold for
$\rho \in \{0.5, 0.8\}$; instead, while still relatively small ($\le 10^{-2}$), the absolute difference $|q_n-\rho|$ exhibits an oscillatory behavior.
However, this divergent behavior appears to gradually change towards convergence as we increase $d$, 
as shown in Figures~\ref{fig:conv_Omin_d32}--\ref{fig:conv_Omin_d64}. For $d=32$ we notice 
that $\lim_{n\to\infty}q_n = \rho$ for all the cases, even though it appears to be slightly slower for Orthomin(13); for $d=64$ 
(Figure~\ref{fig:conv_Omin_d64}) w have convergence of $q_n$ to $\rho$ for all the value of $k$ and $\rho$. This indicates that, for a 
fixed $k$, if $d$ is large enough, or $\rho$ is sufficiently small, then  $\lim_{n\to\infty}q_n = \rho$, which is consistent with Conjecture~\ref{conj:o1tokexamples}.

}

\begin{figure}[!htb]
\begin{center}
        \includegraphics[width=\textwidth]{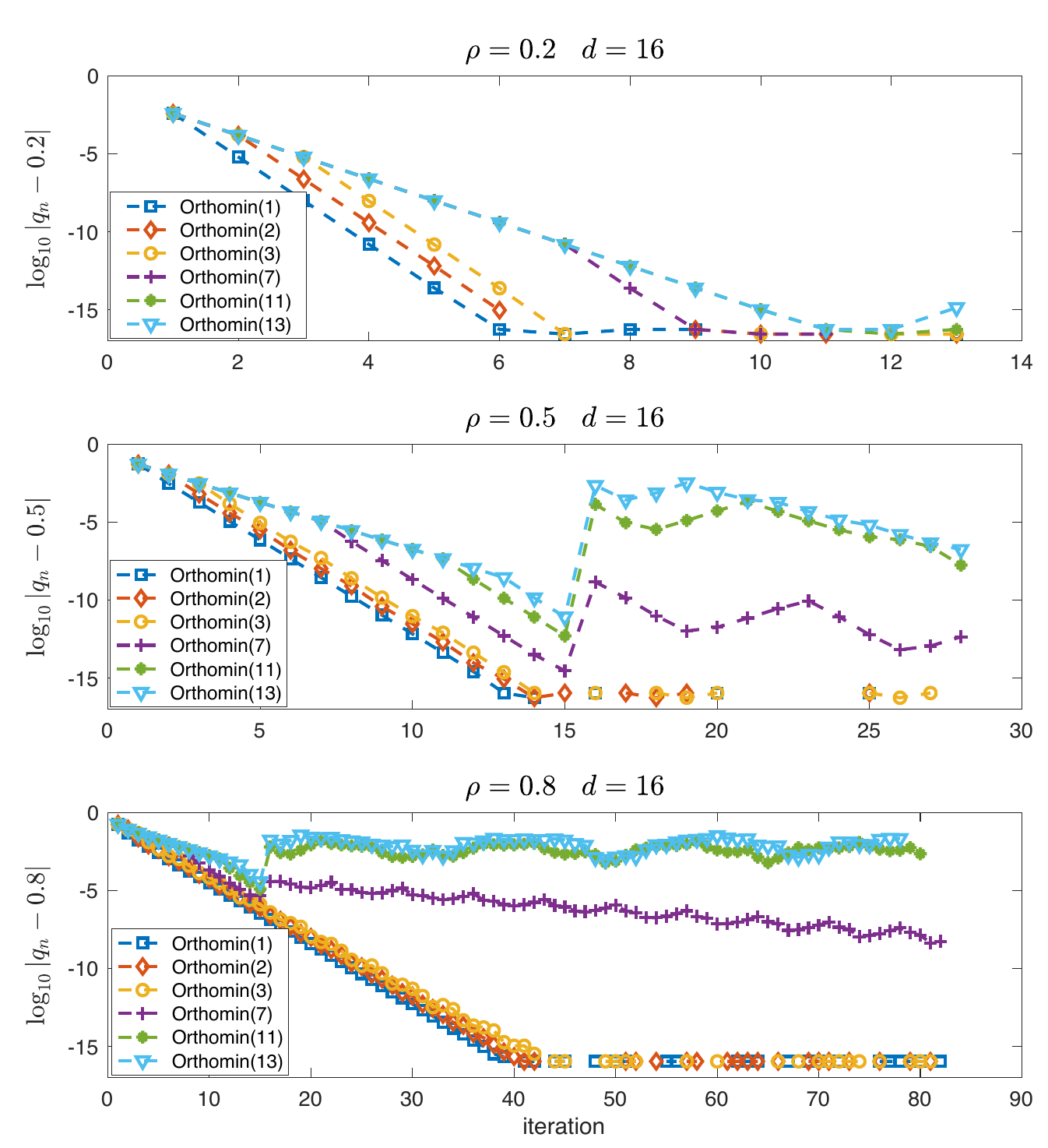}%constr_unconstr3.eps
\caption{Convergence results for $q_n$ to $\rho$  for Orthomin$(k)$, 
  $k\in \{1, 2, 3, 7, 11, 13\}$,   $\rho\in \{0.2, 0.5, 0.8\}$, and $d=16$.} 
\label{fig:conv_Omin_d16}
\end{center}
\end{figure}

\begin{figure}[!htb]
\begin{center}
        \includegraphics[width=\textwidth]{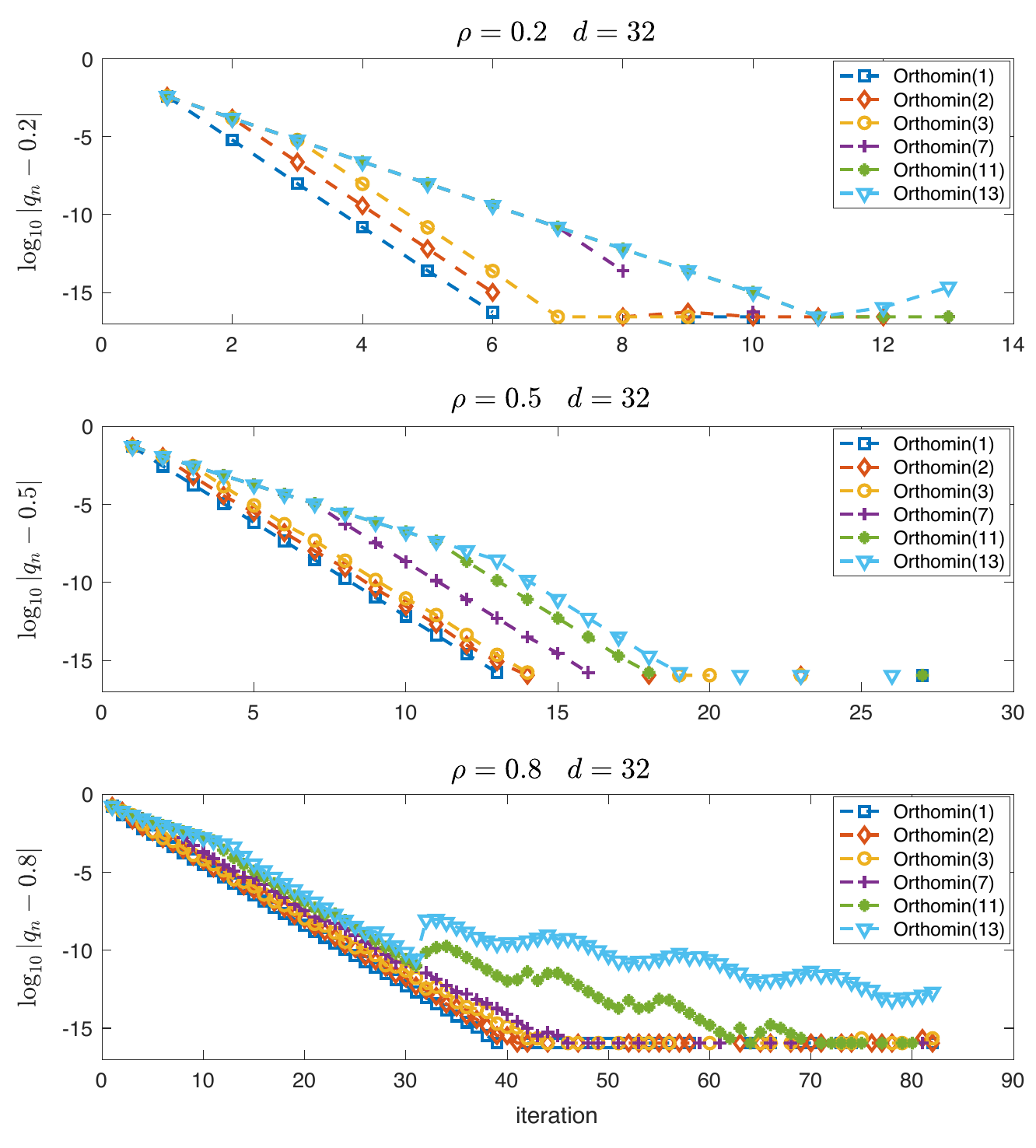}%constr_unconstr3.eps
\caption{Convergence results for $q_n$ to $\rho$  for Orthomin$(k)$, 
  $k\in \{1, 2, 3, 7, 11, 13\}$,   $\rho\in \{0.2, 0.5, 0.8\}$, and $d=32$.} 
\label{fig:conv_Omin_d32}
\end{center}
\end{figure}

\begin{figure}[!htb]
\begin{center}
        \includegraphics[width=\textwidth]{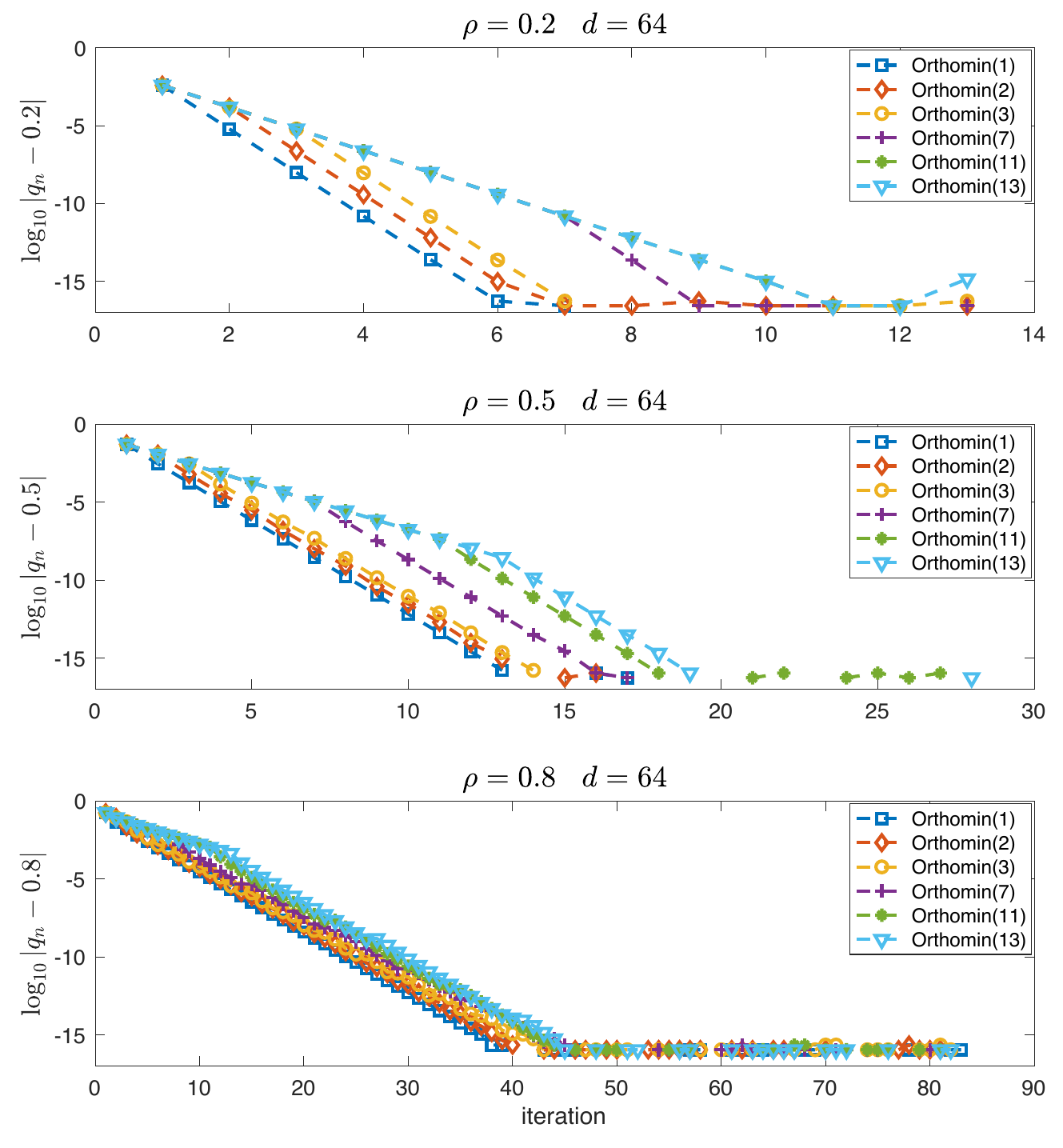}%constr_unconstr3.eps
\caption{Convergence results for $q_n$ to $\rho$  for Orthomin$(k)$, 
  $k\in \{1, 2, 3, 7, 11, 13\}$,   $\rho\in \{0.2, 0.5, 0.8\}$, and $d=64$.} 
\label{fig:conv_Omin_d64}
\end{center}
\end{figure}

\bibliography{ref}
\bibliographystyle{elsarticle-num}
%\bibliographystyle{siam}

%\section*{Acknowledgment} 
%The authors thank Potra, Ani{\c t}escu, Dupont for helpful discussions.

\end{document}